\newtheorem{thm}{Theorem}[section]
\newtheorem{lem}[thm]{Lemma}
\newcommand{\inte}{{\mathrm{int}}\,}
\newcommand{\sgn}{{\mathrm{sgn}}\,}
\newcommand{\ee}{\varepsilon}
\newcommand{\R}{{\Bbb R}}
\begin{document}
\hfill\today
\bigskip

\title{A problem of Klee on inner section functions of convex bodies}
\author[R.~J.~Gardner, D.~Ryabogin, V.~Yaskin, and A.~Zvavitch]
{R.~J.~Gardner, D.~Ryabogin, V.~Yaskin, and A.~Zvavitch}
\address{Department of Mathematics, Western Washington University,
Bellingham, WA 98225-9063} \email{Richard.Gardner@wwu.edu}
\address{Department of Mathematics, Kent State University,
Kent, OH 44242, USA} \email{ryabogin@math.kent.edu}
\address{Department of Mathematical and
Statistical Sciences, University of Alberta, Edmonton, Alberta T6G 2G1, Canada} \email{vladyaskin@math.ualberta.ca}
\address{Department of Mathematics, Kent State University,
Kent, OH 44242, USA} \email{zvavitch@math.kent.edu}
\thanks{First author supported in
part by U.S.~National Science Foundation Grant DMS-0603307. Second and fourth authors supported in part by U.S.~National Science Foundation Grant DMS-0652684. Third author supported in part by NSERC. This research arose from discussions at the workshop on Mahler's Conjecture and Duality in Convex Geometry at the American Institute of Mathematics, August 9--13, 2010.}
\subjclass[2010]{Primary: 52A20, 52A40; secondary: 52A38} \keywords{convex body, intersection body, cross-section body, inner section function, HA-measurement, geometric tomography} \maketitle
\begin{abstract}
In 1969, Vic Klee asked whether a convex body is uniquely determined (up to translation and reflection in the origin) by its inner section function, the function giving for each direction the maximal area of sections of the body by hyperplanes orthogonal to that direction. We answer this question in the negative by constructing two infinitely smooth convex bodies of revolution about the $x_n$-axis in $\R^n$, $n\ge 3$, one origin symmetric and the other not centrally symmetric, with the same inner section function.  Moreover, the pair of bodies can be arbitrarily close to the unit ball.
\end{abstract}

\section{Introduction}
Let $K$ be a convex body in $\mathbb R^n$.  The {\em inner section function} $m_K$ is defined by
$$m_K(u)=\max_{t\in \R} V(K\cap (u^{\perp}+tu)),$$
for $u\in S^{n-1}$.  Here $u^{\perp}$ denotes the hyperplane through the origin orthogonal to $u$ and $V$ denotes volume, that is, the $k$-dimensional Hausdorff measure of a $k$-dimensional body.  (See Section~\ref{prelim} for other notation and definitions.)  Thus the inner section function simply gives, for each direction, the maximal area of cross-sections orthogonal to that direction.

Interest in the inner section function goes back at least to the 1926 paper \cite{B} of Bonnesen.  It has been called the {\em inner $(n-1)$-quermass}, particularly by authors who prefer to use the term ``outer $(n-1)$-quermass" for the brightness function, the function giving the areas of the orthogonal projections of a body onto hyperplanes.  Values of the inner section function have also been called {\em HA-measurements}, a term devolving from the de Haas-van Alphen effect in the study of Fermi surfaces of metals.  The following explanation is taken from \cite[Note~8.12]{G}. The Fermi surface of a metal bounds a body formed, in velocity space, by velocity states occupied at absolute zero by valence electrons of the metal.  The Pauli exclusion principle allows no more than two electrons (with opposite spins) to possess the same velocity (i.e., speed and direction), so electrons can only move into unoccupied states lying outside the Fermi surface.  The more
electrons there are near the Fermi surface, the larger the number that can increase their energy when the metal is heated and the larger the number whose spins can be aligned with a magnetic field. In this way the Fermi surface relates to the specific heat and magnetic properties of the metal, and the concept also provides an explanation of conductivity and ductility, for example.  The body bounded by a Fermi surface is not generally convex.  However, the definition of the inner section function above extends naturally to any bounded Borel set and for the body bounded by a Fermi surface it may actually be measured by means of the de Haas--van Alphen effect, i.e., magnetism induced in the metal by a strong magnetic field at a low temperature.

In two influential articles published about 40 years ago, Klee \cite{Kl}, \cite{Kl2} asked whether a convex body in $\R^n$, $n\ge 3$, is uniquely determined, up to translation and reflection in the origin, by its inner section function.  The problem also appears in the books \cite[p.~24]{CFG} and \cite[Problem 8.8(i)]{G}. It has long been known that planar convex bodies whose inner section functions are constant are precisely the planar convex bodies of constant width.  This explains the restriction $n\ge 3$ in Klee's problem, since when $n=2$, any convex body of constant width that is not a disk provides a counterexample.

The main purpose of the present paper is to answer Klee's question negatively.  We do this by constructing two convex bodies in $\R^n$, $n\ge 3$, each a body of revolution about the $x_n$-axis, such that one body is origin symmetric and the other is not centrally symmetric, while both have the same inner section function.

Klee's problem is now seen as belonging to a fairly extensive literature around the important concept of an intersection body.  Given a star body $L$ in $\R^n$, i.e., a set star-shaped with respect to the origin whose radial function $\rho_L$ is continuous, the {\em intersection body} $IL$ of $L$ is defined by
$$\rho_{IL}(u)=V(L\cap u^{\perp}),$$
for $u\in S^{n-1}$.  Clearly, $IL$ is an origin-symmetric star body.  Moreover, it is known that an origin-symmetric star body $L$ is uniquely determined by the values $V(L\cap u^{\perp})$, $u\in S^{n-1}$, and hence by its intersection body $IL$.  This was proved by Funk \cite{F} for convex bodies in $\R^3$; the general result is called Funk's section theorem in \cite[Corollary~7.2.7]{G}, though it was not stated explicitly in this generality until the work of Lifshitz and Pogorelov \cite{LP} on Fermi surfaces.  Intersection bodies were introduced by Lutwak \cite{L5} and turned out to be the key to solving the Busemann-Petty problem (see \cite{G3}, \cite{GKS}, and \cite{Z1}, as well as \cite[Section~8.2]{G} and \cite[Chapter 5]{Ko}).

For an origin-symmetric convex body $K$ in $\R^n$, the Brunn-Minkowski inequality (see \cite{Gar02}) implies that $m_K(u)=V(K\cap u^{\perp})$ for each $u\in S^{n-1}$.   It follows immediately that Klee's problem has an affirmative answer if set entirely within the class of origin-symmetric convex bodies.  It is also known that in this case $IK$ is itself an origin-symmetric convex body, by Busemann's theorem; see, for example, \cite[Theorem~8.1.10]{G}.

Now suppose that $K$ is an arbitrary convex body in $\R^n$.  The {\em cross-section body} $CK$ of $K$ is defined by
$$\rho_{CK}(u)=m_K(u),$$
for $u\in S^{n-1}$.  Cross-section bodies, introduced by H.~Martini, are the subject of \cite[Section~8.3 and Note~8.12]{G}.  As we have seen, when $K$ is origin symmetric, $m_K(u)=V(K\cap u^{\perp})$ for each $u\in S^{n-1}$, and hence $CK=IK$ is also an origin-symmetric convex body.

With this background, Klee's question can be rephrased as asking whether a convex body in $\R^n$, $n\ge 3$, is uniquely determined, up to translation and reflection in the origin, by its cross-section body.  In these terms, our solution involves constructing a non-centrally-symmetric convex body $K$ whose cross-section body $CK$ is the (necessarily origin-symmetric and convex) intersection body $IL$ of some origin-symmetric convex body $L$.  Thus $K$ and $L$ are not equal, up to translation and reflection in the origin, while $CK=IL=CL$.

The paper is organized as follows.  After the preliminary Section~\ref{prelim}, the main Section~\ref{main} states our result and proves it via a succession of lemmas.  The paper ends with the short Section~\ref{conclude} of concluding remarks.

\section{Definitions, notation, and preliminaries}\label{prelim}

As usual, $S^{n-1}$ denotes the unit sphere and $o$ the origin in Euclidean
$n$-space ${\Bbb R}^n$.  The unit ball in $\R^n$ will be denoted by $B^n$. The
standard orthonormal basis for $\R^n$ will be $\{e_1,\dots,e_n\}$.  The inner product of vectors $x$ and $y$ is denoted by $\langle x,y\rangle$ and the Euclidean norm of $x$ by $|x|$.

If $X$ is a set,  we denote by $\partial X$ and $\inte X$ the {\it
boundary} and {\it interior} of $X$, respectively.

The set $-X$ is the {\em reflection} of $X$ in the origin.  A set $X$ is {\it origin symmetric} if $X=-X$ and {\em centrally symmetric} if it is a translate of an origin-symmetric set.

A {\it body} is a compact set equal to the closure of its interior.  If $K$ is a $k$-dimensional body in $\R^n$, then $V(K)$ is its {\em volume} ${\mathcal{H}}^k(K)$, where ${\mathcal{H}}^k$ is $k$-dimensional Hausdorff measure in $\R^n$, $k=1,\dots,n$. The notation $dz$ will always
mean $d{\mathcal{H}}^k(z)$ for the appropriate $k=1,\dots, n$.
We follow Schneider \cite{Sch93} by writing $\kappa_n$ and $\omega_n=n\kappa_n$ for the volume and surface area of the unit ball in $\R^n$, respectively.

A {\it convex body} is a compact convex set with nonempty interior.

If $L$ is a body containing the origin in its interior and star-shaped with respect to the origin, its {\em radial function} $\rho_L$ is defined by
$$\rho_L(x)=\max\{c\in \R: cx\in L\},$$
for $x\in\R^n\setminus\{o\}$.  Note that $\rho_L(x)=1$ if and only if $x\in \partial L$.  For $u\in S^{n-1}$, $\rho_L(u)$ gives the distance from the origin to $\partial L$ in the direction $u$.  Moreover, $\rho_L$ is homogeneous of degree $-1$, i.e., $\rho(tx)=\rho(x)/t$ for $t>0$.  The homogeneity means that we can often regard $\rho_L$ as a function on $S^{n-1}$.

For the purposes of this paper, a {\em star body} is a body whose radial function is continuous on $S^{n-1}$.  (The reader should be aware that other definitions are often used.)  When considering star bodies of revolution about the $x_n$-axis in $\R^n$, we can regard the radial function as a function of the spherical polar coordinate angle $\phi$ with the positive $x_n$-axis.

Let $K$ be a convex body and let $u\in S^{n-1}$.  The {\em parallel section function} $A_{K,u}(t)$ is defined by
\begin{equation}\label{Adef}
A_{K,u}(t)=V(K\cap (u^{\perp}+tu)),
\end{equation}
for $t\in \R$.  See \cite[Lemma~2.4 and Section~3.3]{Ko} for recent results concerning this function.  With this notation, the inner section function of $K$ is
$$m_K(u) = \max_{t\in \mathbb R} A_{K,u}(t),$$
for $u\in S^{n-1}$.

In the literature, it is often ignored that the parallel section function $A_{K,u}$ is just the $(n-1)$-dimensional X-ray of $K$ in the direction $u$.  See \cite[Chapter~2]{G}.  Thus this function has a prominent role in Geometric Tomography, and indeed Klee's problem was a significant open question in this subject.

As usual, $C(X)$ and $C^{k}(X)$, $1\le k\le\infty$, denote the classes of continuous and $k$ times continuously differentiable functions on a subset $X$ of $\R^n$, and $C_e(X)$ and  $C^k_e(X)$ will signify the even functions in these classes.

We denote by $R$ the {\it spherical Radon transform}, defined by
$$(Rf)(u)=\int_{S^{n-1}\cap u^{\perp }}f(v)\,dv,$$ for bounded Borel
functions $f$ on $S^{n-1}$.  It is known (see \cite[Theorem~C.2.4]{G}) that $R$ is injective on the class of even functions on $S^{n-1}$, and this fact, applied to $(n-1)$th powers of radial functions of star bodies, immediately yields Funk's section theorem mentioned in the Introduction.  As a historical aside, we remark that this injectivity property of $R$ was proved in 1904 by Minkowski (see \cite[p.~430]{G}), who used it to show that a convex body in $\R^3$ of constant girth also has constant width. However, Minkowski apparently did not notice the result of Funk (who proved it differently).

We adopt a standard definition of the Fourier transform $\widehat{f}$ of a
function $f\in L_1(\R^n)$, namely
$$\widehat{f}(x)=\int_{\R^n}f(y)e^{-i\langle x,y\rangle}\,dy.$$

If $f\in C(S^{n-1})$ (or $f\in C^k(S^{n-1})$) and $p\in \R$, then $f$ can be extended by setting
\begin{equation}\label{homog}
f(x)=|x|^{-n+p}f\left(\frac{x}{|x|}\right),
\end{equation}
for $x\in \R^n\setminus\{o\}$, and this extension is a homogeneous of degree $-n+p$ function in $C(\R^n\setminus \{o\})$ (or $C^k(\R^n\setminus \{o\})$, respectively). Note that throughout the paper, we shall use the same notation for a function on $S^{n-1}$ and its homogeneous extension to $\R^n\setminus\{o\}$, since the distinction will be clear from the context. The Fourier transform of this extension of $f$, which we also denote by $\widehat{f}$, exists in the sense of distributions, as explained in \cite[p.~389]{GYY} or \cite[Sections~2.5 and~3.3]{Ko}, for example.  Where appropriate, statements in the sequel involving Fourier transforms of homogeneous extensions of functions on $S^{n-1}$ are also to be considered in the sense of distributions.

Let $D^{\alpha}$, where $\alpha=(\alpha_1,\dots,\alpha_n)$ is a multiindex of nonnegative integers, be the differential operator defined by
$$D^{\alpha}f=\frac{\partial^{|\alpha|_1}f}{\partial x_1^{\alpha_1}\cdots\partial x_n^{\alpha_n}},$$
for $f\in C^{|\alpha|_1}(\R^n)$, where $|\alpha|_1=\sum_{j=1}^n\alpha_j$.  Then
\begin{equation}\label{FD1}
\left(D^{\alpha}f\right)^{\wedge}=i^{|\alpha|_1}x_1^{\alpha_1}\cdots x_n^{\alpha_n}\widehat{f}
\end{equation}
and
\begin{equation}\label{FD2}
D^{\alpha}\widehat{f}=(-i)^{|\alpha|_1}\left(x_1^{\alpha_1}\cdots x_n^{\alpha_n}f\right)^{\wedge}.
\end{equation}
These formulas also hold in the distributional sense; see, for example, \cite[p.~35]{Ko}.
The relation
\begin{equation}\label{Lap}
\left(\triangle f\right)^{\wedge}=-|x|^2\widehat{f},
\end{equation}
where $\triangle$ is the Laplacian operator in $\R^n$, follows directly from (\ref{FD1}).

If $f$ is even, then $\widehat{f}$ is also even.  If $f\in C_e(\R^n\setminus \{o\})$ is homogeneous of degree $-n+1$, then by \cite[Lemma~3.7]{Ko}, $\widehat{f}$ is a homogeneous of degree $-1$ function in $C_e(\R^n\setminus \{o\})$ and
\begin{equation}\label{RF}
(Rf)(u)=\frac1\pi\widehat{f}(u),
\end{equation}
for $u\in S^{n-1}$.

The spherical Radon transform is a continuous bijection from $C_e^{\infty}(S^{n-1})$ to itself; see, for example, \cite[Theorem~C.2.5]{G}.  Let $g\in C_e^{\infty}(S^{n-1})$, and extend $g$ as in (\ref{homog}) (with $f$ there replaced by $g$ and $p=n-1$) to a homogeneous of degree $-1$ function in $C_e^{\infty}(\R^n\setminus \{o\})$.  If $f=\widehat{g}$, then, by \cite[Lemma~3.16]{Ko},  $f\in C_e^{\infty}(\R^n\setminus \{o\})$ is homogeneous of degree $-n+1$.  Since $g$ is even, we have $\widehat{\widehat g}=(2\pi)^n g$ and from (\ref{RF}) it follows that
\begin{equation}\label{RinvF}
(R^{-1}g)(u)=\frac{1}{(2\pi)^n}(R^{-1}\widehat{\widehat{g}}\,)(u)=
\frac{1}{(2\pi)^n}(R^{-1}\widehat{f}\,)(u)=\frac{\pi}{(2\pi)^n}f(u)
=\frac{\pi}{(2\pi)^n}\widehat{g}(u),
\end{equation}
for all $u\in S^{n-1}$.

A {\em rotationally symmetric} function $f$ on $S^{n-1}$ is one that can be defined via a function $f(\phi)$ of the vertical angle $\phi$ in spherical polar coordinates by setting $f(u)=f(\arccos u_n)=f(\phi)$ for $u=(u_1,\dots,u_n)\in S^{n-1}$. Moreover, we will consider $f$ as an even function on $S^1$, by first extending $f$ to $[-\pi,\pi]$ by letting $f(-\phi)=f(\phi)$ and then regarding $S^1$ as $[-\pi,\pi]$ with its endpoints identified.
Of course we are abusing notation by using the same letter for the function of $u\in S^{n-1}$, the function of $\phi\in [0,\pi]$, and the function of $\phi\in S^1$, but will adopt this convenient practise throughout the paper.

\section{Main result}\label{main}

\begin{thm}\label{mainthm}
There exist convex bodies $K$ and $L$ in $R^n$, $n\ge 3$, with $\rho_K,\rho_L\in C^{\infty}(S^{n-1})$, each a body of revolution about the $x_n$-axis, such that $K$ is not centrally symmetric, $L$ is origin symmetric, and $m_K=m_L$.
\end{thm}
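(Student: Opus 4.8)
The plan is to work entirely within the class of star bodies of revolution about the $x_n$-axis, so that radial functions become even functions of the single polar angle $\phi\in S^1$, and to engineer $K$ as a ``controlled perturbation'' of the ball $B^n$. The key reduction is this: if $L$ is origin symmetric and convex, Brunn--Minkowski gives $m_L(u)=V(L\cap u^\perp)=\rho_{IL}(u)$, so $CL=IL$; by Busemann's theorem $IL$ is an origin-symmetric convex body. Thus it suffices to produce a non-centrally-symmetric convex body $K$ of revolution whose cross-section body $CK$ equals $IL$ for some origin-symmetric convex body $L$ of revolution. The strategy is to \emph{prescribe} the common cross-section body first: pick $IL$ to be a small rotationally symmetric $C^\infty$ perturbation of $B^n$, recover $L$ from $IL$ via the inverse spherical Radon transform (using $\rho_L^{n-1}=(n-1)$-dependent constant times $R^{-1}(\rho_{IL})$, legitimate by injectivity and surjectivity of $R$ on $C_e^\infty$, and by \eqref{RinvF} if one wants an explicit Fourier form), and then separately build $K$ so that its maximal-section function realizes the same profile.

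The heart of the construction is the choice of $K$. The idea is to take $\rho_K(\phi)=1+\epsilon\, h(\phi)$ where $h$ is an \emph{even}, smooth, but \emph{not} $\pi$-antiperiodic function of $\phi$ (so that $K$ fails to be centrally symmetric — e.g. $h$ has a nonzero ``odd-about-the-equator'' component, making the north and south caps genuinely different), chosen so small that $K$ is convex and so that for every direction $u$ making angle $\psi$ with the $x_n$-axis, the maximizing hyperplane section $K\cap(u^\perp+t(\psi)u)$ has $(n-1)$-volume equal to a prescribed even function $M(\psi)$. For $\epsilon$ small the maximizing $t(\psi)$ is near $0$ and, by the implicit function theorem applied to $\partial_t A_{K,u}(t)=0$, depends smoothly on $\psi$ and on $K$; expanding $A_{K,u}(t(\psi))$ to first order in $\epsilon$ linearizes the problem. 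The crucial point is that to first order the contribution of the shift $t(\psi)$ \emph{drops out} (it is critical), so $m_K(\psi)$ to first order depends on $\rho_K$ only through an averaging operator — essentially the spherical Radon transform of $\rho_K^{n-1}$ — exactly as for central sections. Hence at the linearized level $K$ and its reflection-symmetrization have the same inner section function, and the asymmetry of $K$ lives in the kernel of the relevant operator; one then sets up a fixed-point / quantitative inverse function argument to upgrade this to an \emph{exact} identity $m_K=M=m_L$, with all the smallness and convexity constraints preserved.

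Concretely the steps I would carry out are: (1) fix the ambient dimension $n\ge 3$ and reduce to rotationally symmetric profiles, recording the formula for $A_{K,u}(t)$ for a body of revolution as an integral of $\rho_K$ over a spherical cap, differentiating in $t$ to get the first-order condition for the maximizing slice; (2) write $\rho_K=1+\epsilon h$, $\rho_L=1+\epsilon g$ with $h$ even and \emph{non-antiperiodic}, $g$ even and \emph{antiperiodic} (the antiperiodicity of $g$ in $\phi$ is what encodes origin symmetry of $L$), and compute $m_K$, $m_L$ as functionals $F(h)$, $G(g)$ with $F(h)=G(\mathcal{S}h)+O(\epsilon^2)$ where $\mathcal{S}$ is the ``evenize about the equator'' projection and the error is controlled in a suitable $C^k$ norm; (3) show $F-G\circ\mathcal{S}$, as a map on a small ball in an appropriate function space, is a contraction after inverting its surjective linear part (here the surjectivity of $R$ on $C^\infty_e(S^{n-1})$ from \cite[Theorem~C.2.5]{G} and the smoothing estimates \eqref{RF}, \eqref{RinvF} do the work), producing $h$ of prescribed symmetry-breaking type with $m_K$ identically equal to a ball-like profile; (4) pick $g$ (equivalently $L$) so that $m_L$ equals that same profile, again by inverting $R$; (5) verify convexity, $C^\infty$ smoothness, central asymmetry of $K$, origin symmetry of $L$, and closeness to $B^n$, all of which follow from taking $\epsilon$ small.

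The main obstacle I anticipate is step (3): controlling the maximizing slice $t(\psi)$ beyond first order. Unlike the origin-symmetric case, where symmetry forces $t\equiv 0$, here $t(\psi)\ne 0$ and the map $K\mapsto m_K$ is genuinely nonlinear and nonlocal (it involves an argmax), so one must show the second-order terms are both small and ``tangential'' enough not to destroy the cancellation that makes asymmetric $K$ admissible — i.e. that the nonlinear remainder, after projection off the kernel of the linearized operator, is a strict contraction on a small neighborhood. Establishing the requisite uniform $C^k$ bounds on $t(\psi)$ and on $\partial_t^2 A_{K,u}$ (to keep the argmax nondegenerate, hence the implicit-function machinery valid) is the technical crux; everything else is soft functional analysis on the sphere plus the standard facts about $R$, $I$, and Busemann's theorem quoted above.
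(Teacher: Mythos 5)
Your overall framework is the right one (cross-section vs.\ intersection bodies, Brunn--Minkowski, Busemann, inverse spherical Radon transform), but you have inverted the logic of the construction in a way that creates a real obstruction. You propose to \emph{prescribe} the common profile $M=m_K=m_L$ and then \emph{solve} the nonlinear inverse problem $m_K=M$ for a non-symmetric $K$ by a fixed-point/inverse-function argument. That step is not just ``the main obstacle you anticipate'' --- it is genuinely problematic as stated, because the linearization of $K\mapsto m_K$ at the ball is (up to constants and $(n-1)$th powers) the spherical Radon transform $R$, and $R^{-1}$ loses derivatives: $R:C^\infty_e(S^{n-1})\to C^\infty_e(S^{n-1})$ is a bijection, but it is a smoothing operator, so its inverse is not bounded on any fixed $C^k$. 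A naive contraction $h_{j+1}=h_j-\mathcal{L}^{-1}(F(h_j)-M)$ bleeds regularity at every step, so you would need a Nash--Moser-type scheme or a careful tame-estimate setup; nothing in your sketch supplies that, and you cannot simply ``invert the surjective linear part'' in one Banach space.

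The missing idea is that this inverse problem for $K$ can be avoided entirely. Since $m_K$ is automatically an even, rotationally symmetric function (and smooth by the argmax analysis), there is no need to match it to a pre-chosen $M$: pick an \emph{explicit} non-centrally-symmetric, smooth, convex $K$ near the ball (the paper uses $\rho_{K_\ee}(\phi)=(1+\ee\cos^3\phi)^{-1/3}$), let $M:=m_{K_\ee}$, and then define $L$ by $\rho_L^{n-1}=(n-1)R^{-1}(m_{K_\ee})$. Then $m_L=m_K$ holds by construction and Funk's theorem, and the entire technical burden shifts to showing that $R^{-1}(m_{K_\ee})$ produces a genuine convex body $L$ --- positive radial function and positive curvature, uniformly in small $\ee$. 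That is exactly where the hard estimates live, and you dismiss it in step (5) as ``soft functional analysis ... all of which follow from taking $\ee$ small.'' In fact the paper needs uniform $C^k$ control of the argmax $t_\ee(\phi)=T_\ee(\phi)\ee$ and of the linearization $g_\ee$, and then explicit Fourier-transform formulas for homogeneous extensions (the analogue of your \eqref{RF}--\eqref{RinvF}, via \cite[Eqs.~(3.6)--(3.7)]{GYY}) to bound $\widehat{g_\ee}$ and its first two derivatives on $S^{n-1}$ uniformly in $\ee$. So: your diagnosis of the first-order cancellation of the shift $t(\psi)$ and of the role of the equatorial-antisymmetric kernel of $R$ is correct, but the fixed-point route for $K$ should be dropped in favor of an explicit $K$, and the convexity of $L$ is the real crux, not an afterthought. (Also, a small terminology slip: for a body of revolution, origin symmetry of $L$ corresponds to the profile being $\pi$-\emph{periodic} in $\phi$ (symmetric about the equator), not $\pi$-antiperiodic; your parenthetical ``odd-about-the-equator component'' for the $K$ asymmetry is the right condition, but the words ``antiperiodic''/``non-antiperiodic'' are reversed.)
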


The theorem will be proved in a series of lemmas.
\begin{lem}\label{lem1}
Let $0<\ee<1$ and let $K_{\ee}$ be the body of revolution about the $x_n$-axis in $\R^n$ with radial function given in spherical polar coordinates by
\begin{equation}\label{rho}
\rho_{K_{\ee}}(\phi)=\left(1+\ee \cos^3 \phi\right)^{-1/3},
\end{equation}
where $0\le \phi\le \pi$ is the angle with the positive $x_n$-axis, or equivalently by
\begin{equation}\label{rho2}
\rho_{K_\ee}(x)=(|x|^3+\ee x_n^3)^{-1/3},
\end{equation}
where $x=(x_1,\dots,x_n)\in \R^n\setminus\{o\}$.  Then $\rho_{K_{\ee}}\in C^{\infty}(S^{n-1})$, the body $K_{\ee}$ is not centrally symmetric, and there is a positive $\ee_0\le 1$ such that $K_{\ee}$ is convex for all $\ee<\ee_0$.
\end{lem}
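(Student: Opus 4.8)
The plan is to verify each of the three assertions in turn, the last being the substantive one. First, to see that $\rho_{K_\ee}\in C^\infty(S^{n-1})$, note that the function $x\mapsto |x|^3+\ee x_n^3$ is smooth on $\R^n$, and on $S^{n-1}$ (where $|x|=1$) its value is $1+\ee x_n^3 \ge 1-\ee >0$ since $0<\ee<1$ and $|x_n|\le 1$; hence $x\mapsto (|x|^3+\ee x_n^3)^{-1/3}$ is smooth and positive on a neighborhood of $S^{n-1}$, which gives \eqref{rho2} and, after passing to polar coordinates via $x_n=|x|\cos\phi$, the equivalent form \eqref{rho}. Positivity of the radial function also shows $K_\ee$ is a genuine star body containing the origin in its interior. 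Second, $K_\ee$ is not centrally symmetric: from \eqref{rho} we have $\rho_{K_\ee}(0)=(1+\ee)^{-1/3}$ while $\rho_{K_\ee}(\pi)=(1-\ee)^{-1/3}$, so the body is not symmetric about the origin; and since it is a body of revolution about the $x_n$-axis, its only possible center would have to lie on that axis, so it suffices to check that no translate of $K_\ee$ along the $x_n$-axis is origin symmetric. This follows by comparing the widths of $K_\ee$ in directions near $e_n$ — e.g., one checks using \eqref{rho} that the half-widths of $K_\ee$ at angles $\phi$ and $\pi-\phi$ from the positive $x_n$-axis are not related by a common shift, or more simply that the function $\phi\mapsto\rho_{K_\ee}(\phi)\cos\phi$ giving the $x_n$-coordinate of the boundary is not centrally symmetric in the required sense. (A clean way: the maximum value of $x_n$ on $K_\ee$ is $(1+\ee)^{-1/3}$, attained at $\phi=0$, whereas the minimum is $-(1-\ee)^{-1/3}$; for a translate along $e_n$ to be origin symmetric these would have to be negatives of each other, forcing $\ee=0$.)

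The main work is proving convexity for small $\ee$. The strategy is a perturbation argument: $K_0=B^n$ is the unit ball, whose boundary has all principal curvatures equal to $1$, and $\rho_{K_\ee}$ depends smoothly on $\ee$ (jointly with the point on $S^{n-1}$), with $\rho_{K_0}\equiv 1$. I would introduce a $C^\infty$ defining function for $\partial K_\ee$, for instance $F_\ee(x) = |x|^3+\ee x_n^3$, so that $\partial K_\ee = \{x: F_\ee(x)=1\}$ near $S^{n-1}$, and recall the standard criterion that a hypersurface $\{F_\ee=1\}$ bounds a convex body precisely when the Hessian $D^2 F_\ee$ is positive definite on the tangent space $\nabla F_\ee(x)^\perp$ at each boundary point (equivalently, the second fundamental form is positive definite). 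At $\ee=0$, $F_0(x)=|x|^3$ has $\nabla F_0(x)=3|x|x$ and $D^2F_0(x)=3|x|I+3|x|^{-1}xx^T$, which is positive definite everywhere on $\R^n\setminus\{o\}$; restricted to $S^{n-1}$ and to the tangent space $x^\perp$ it equals $3I$, strictly positive definite. Since the boundary of $K_\ee$ stays in a fixed compact annulus for $\ee$ small, and since $(x,\ee)\mapsto D^2F_\ee(x)$ is continuous, the restricted Hessian remains positive definite for all $\ee$ less than some $\ee_0\le 1$, uniformly over the relevant boundary points; this gives convexity of $K_\ee$ for $\ee<\ee_0$.

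The step I expect to be the main obstacle is making the convexity argument fully rigorous and, if a concrete $\ee_0$ is wanted, effective — one must control the tangential Hessian of $F_\ee$ at points of $\partial K_\ee$, not merely at points of $S^{n-1}$, so a small bookkeeping argument is needed to transfer the estimate from the sphere (where $\ee=0$) to the nearby surface $\partial K_\ee$; this is where one would either invoke a soft compactness/continuity argument (cheap, gives existence of $\ee_0$ only) or compute $D^2 F_\ee$ explicitly, diagonalize using the rotational symmetry to reduce to a two-dimensional problem in the $(\phi)$-variable, and extract an inequality of the form $\ee<\ee_0$ with an explicit constant. Given the statement only claims existence of some positive $\ee_0\le 1$, I would present the compactness version: fix the compact set of $(x,\ee)$ with $F_\ee(x)=1$ and $\ee\in[0,1/2]$, say; observe the minimal eigenvalue of the tangential Hessian is a continuous, everywhere-positive function on the $\ee=0$ slice; and conclude by continuity that it stays positive on a neighborhood, which contains $\{\ee<\ee_0\}$ for suitable $\ee_0$.
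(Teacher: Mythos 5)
Your smoothness argument is fine and matches the paper's. Your convexity argument is sound in outline but takes a genuinely different route: the paper reduces to the planar cross-section $J_\ee$ in the $x_1,x_n$-plane (a valid reduction, since for a body of revolution convexity is equivalent to concavity of the cross-sectional radius as a function of $x_n$) and then computes the curvature explicitly via the polar formula $\bigl(2(r')^2-rr''+r^2\bigr)/\bigl((r')^2+r^2\bigr)^{3/2}$, obtaining a closed expression that is manifestly positive for small $\ee$; you instead stay in $\R^n$, use the defining function $F_\ee(x)=|x|^3+\ee x_n^3$, and argue via continuity of the tangential Hessian plus compactness. The paper's route is more explicit (it underlies the numerical claim $\ee_0\approx 0.91$ made after the lemma), yours is softer but clean; both are correct provided you cite the standard fact that positive definiteness of the second fundamental form on a compact connected level hypersurface implies convexity of the enclosed region.

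The genuine gap is in your argument that $K_\ee$ is not centrally symmetric. Your parenthetical ``clean way'' is incorrect: you claim that for a translate of $K_\ee$ along $e_n$ to be origin symmetric, the values $(1+\ee)^{-1/3}$ and $-(1-\ee)^{-1/3}$ would have to be negatives of each other, forcing $\ee=0$. But after translating by $c\,e_n$, the maximum of $x_n$ becomes $(1+\ee)^{-1/3}-c$ and the minimum becomes $-(1-\ee)^{-1/3}-c$, and the necessary condition $(1+\ee)^{-1/3}-c=(1-\ee)^{-1/3}+c$ is solvable for every $\ee$: take $c=\tfrac12\bigl((1+\ee)^{-1/3}-(1-\ee)^{-1/3}\bigr)$. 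So matching the top and bottom of the body cannot rule out central symmetry; you have confused the condition for $K_\ee$ itself to be origin symmetric with the condition for some translate of it to be. Your other two suggestions (comparing half-widths ``up to a common shift,'' or inspecting $\phi\mapsto\rho_{K_\ee}(\phi)\cos\phi$) are left unverified and are not obviously easier than the original claim. The paper's argument is the correct version of what you were reaching for: compare a \emph{translation-invariant} quantity at the two axial boundary points. A center would have to lie on the $x_n$-axis, so the central reflection would map the boundary point at $\phi=0$ to the one at $\phi=\pi$ (these being the only boundary points on the axis) and would therefore equate the boundary curvatures there; but those curvatures are $(1+\ee)^{-2/3}$ and $(1-\ee)^{-2/3}$, which differ for every $\ee>0$.
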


\begin{proof}
It is clear from its definition via (\ref{rho2}) that $\rho_{K_{\ee}}\in C^{\infty}(S^{n-1})$ provided $\ee<1$.  To prove that $K_{\ee}$ is not centrally symmetric and is convex for sufficiently small $\ee$, it suffices to show that this is true for the planar convex body $J_{\ee}$ obtained by intersecting $K_{\ee}$ with the $x_1,x_n$-plane.  The radial function of $J_{\ee}$ is also given by (\ref{rho}), where we can regard $\phi$ as the polar coordinate angle in the $x_n,x_1$-plane (note that $\rho_{J_{\ee}}(-\phi)=\rho_{J_{\ee}}(\phi)$).  It is well known that the curvature of a planar $C^2$ curve given in polar coordinates by $r=r(\theta)$ is
\begin{equation}\label{cpolar}
\frac{2(r')^2-rr''+r^2}{\left((r')^2+r^2\right)^{3/2}}.
\end{equation}
Using this, we find that the curvature of $J_{\ee}$ is
$$\frac{(1+\ee\cos^3\phi+2\ee\sin^2\phi\cos\phi)(1+\ee\cos^3\phi)^{4/3}}{
((1+\ee\cos^3\phi)^2+\ee^2\cos^4\phi\sin^2\phi)^{3/2}}.$$
When $\ee$ is sufficiently small, the curvature is positive and hence $J_{\ee}$ is convex.  Also, the curvatures when $\phi=0$ and $\phi=\pi$ are $(1+\ee)^{-2/3}$ and $(1-\ee)^{-2/3}$, respectively.  It follows that $J_{\ee}$ is not centrally symmetric, since a center of symmetry would have to lie on the $x_n$-axis, and $\phi=0$ and $\phi=\pi$ correspond to antipodal points in $\partial J_{\ee}$.
\end{proof}

The body $K$ in Theorem~\ref{mainthm} will be $K=K_{\ee}$ for a suitable $\ee$.  From now on we assume that $\ee<\ee_0$.  It can be seen numerically that one can take $\ee_0=0.91$, but note that in any case $\ee_0< 1$ by the definition of $K_{\ee}$.  Since $K_\ee$ is a body of revolution about the $x_n$-axis, for any fixed $t\in\R$, the function $A_{K_{\ee},u}(t)$, $u\in S^{n-1}$ defined by (\ref{Adef}) is rotationally symmetric.  Therefore we can write $A_{K_{\ee},u}(t)=A_{K_{\ee},\,\phi}(t)$,
where we can either consider $\phi\in [0,\pi]$ as the angle between $u$ and $e_n$ or as an element $\phi\in S^1$, as explained at the end of Section~\ref{prelim}.

\begin{lem}\label{lem2}
Let $n\ge 3$ and let $K_{\ee}$ be as in Lemma~\ref{lem1}.  Then

$\mathrm{(i)}$  For each $\phi\in S^1$, the parallel section function $A_{K_{\ee},\,\phi}(t)$ has a maximum at a unique point $t=t_\ee (\phi)$.

$\mathrm{(ii)}$ There is a positive $\ee_1(n)\le \ee_0$ such that
\begin{equation}\label{t}
t_\ee(\phi)=T_{\ee}(\phi)\ee,
\end{equation}
where $T_{\ee}(\phi)\in C^{\infty}(S^1)$ for all $0<\ee<\ee_1(n)$.

$\mathrm{(iii)}$ For $k=0,1,\dots$, there is a constant $c_1(k,n)$ such that
\begin{equation}\label{ds}
\left|\frac{d^k T_{\ee}(\phi)}{d\phi^k}\right|\le c_1(k,n),
\end{equation}
for all $0<\ee<\ee_1(n)$ and $\phi \in S^1$.
\end{lem}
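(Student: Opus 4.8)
The plan is to analyze the parallel section function $A_{K_\ee,\phi}(t)$ by viewing it as a small perturbation (of order $\ee$) of the corresponding function for the unit ball, and then to extract the location of the maximum via the implicit function theorem. First I would set up coordinates: fix a direction $u$ making angle $\phi$ with $e_n$, and in the slicing hyperplane $u^\perp + tu$ compute $A_{K_\ee,\phi}(t)$ as the $(n-1)$-volume of a region whose boundary in the hyperplane is determined by the constraint $\rho_{K_\ee}(x)\ge 1$, i.e.\ $|x|^3 + \ee x_n^3 \le 1$ via the representation (\ref{rho2}). Writing a point in the hyperplane as $tu + y$ with $y\perp u$, the slice is $\{y : |tu+y|^3 + \ee(tu+y)_n^3 \le 1\}$, and for $\ee=0$ this is a Euclidean ball of radius $\sqrt{1-t^2}$, so $A_{K_0,\phi}(t) = \kappa_{n-1}(1-t^2)^{(n-1)/2}$, which is smooth in $(t,\ee)$ jointly (for $\ee$ small and $|t|<1$) with maximum at $t=0$.

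Next I would make the dependence on $\ee$ explicit and rigorous: show that $A_{K_\ee,\phi}(t)$ extends to a $C^\infty$ function of $(t,\ee,\phi)$ in a neighborhood of $\{(t,0,\phi): |t|<1-\delta,\ \phi\in S^1\}$. The cleanest route is to differentiate the defining condition and write $A_{K_\ee,\phi}(t) = \int_{B^{n-1}} \chi\big(|tu+r(\omega)\omega'|^3 + \ee(\cdots)^3 \le 1\big)\,\dots$, or better, to use the coarea/layer-cake form and parametrize the boundary of the slice radially; because $\rho_{K_\ee}$ is $C^\infty$ and uniformly close to $1$, the slice boundary is a $C^\infty$ star-shaped hypersurface in the hyperplane depending smoothly on $(t,\ee,\phi)$, and integrating its enclosed volume preserves smoothness. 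Then $\partial_t A_{K_\ee,\phi}(t)$ is $C^\infty$ in $(t,\ee,\phi)$, vanishes at $(0,0,\phi)$ for every $\phi$ (by the $\ee=0$ computation), and has $\partial_t^2 A_{K_0,\phi}(0) = \kappa_{n-1}(n-1)\cdot(-1) \neq 0$. This is exactly the nondegeneracy needed to apply the implicit function theorem, \emph{with parameters $(\ee,\phi)$ and $S^1$ compact}, to solve $\partial_t A_{K_\ee,\phi}(t)=0$: there is $\ee_1(n)>0$ and a $C^\infty$ function $t_\ee(\phi)$ on $(0,\ee_1(n))\times S^1$ with $t_0(\phi)=0$, giving the unique critical point; since $\partial_t^2 A<0$ there, it is the unique maximum, proving (i). The uniqueness globally (not just near $t=0$) follows because for $|t|$ near $\pm 1$ the slice volume is near $0$, so the maximum must occur in the interior region where the local analysis applies; one states this with an explicit compactness/continuity argument.

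For (ii), since $t_0(\phi)\equiv 0$, Taylor expansion (or the fundamental theorem of calculus) in $\ee$ gives $t_\ee(\phi) = \ee\,T_\ee(\phi)$ with $T_\ee(\phi) = \int_0^1 \partial_s t_s(\phi)\,ds$; because $t_s(\phi)$ is $C^\infty$ jointly in $(s,\phi)$ on the compact set $[0,\ee_1(n)]\times S^1$, the function $T_\ee(\phi)$ is $C^\infty$ in $\phi$ for each $\ee<\ee_1(n)$, establishing (\ref{t}). For (iii), the bound (\ref{ds}) is just the statement that $\partial_\phi^k T_\ee(\phi)$, being a continuous function on the compact set $[0,\ee_1(n)]\times S^1$ (after including $\ee=0$ via the smooth extension), is bounded by a constant $c_1(k,n)$ independent of $\ee\in(0,\ee_1(n))$ and $\phi$. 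The main obstacle is the first technical point: rigorously verifying that the slice volume $A_{K_\ee,\phi}(t)$, including its $t$-derivatives, is jointly $C^\infty$ in $(t,\ee,\phi)$ down to $\ee=0$ — one must handle the boundary of the slice carefully (it is a genuine hypersurface integral whose domain moves with the parameters), but since everything is a uniformly small, smooth perturbation of the round ball and $|t|$ stays bounded away from $1$, this is a routine (if slightly tedious) application of differentiation under the integral sign together with a smooth change of variables straightening the slice boundary.
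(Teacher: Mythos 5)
Your approach for parts (ii) and (iii) is a genuine alternative to the paper's. Instead of rearranging $\partial A_{K_\ee,\phi}/\partial t=0$ to explicitly isolate the $\ee$-factor (the paper's $\Phi,\Psi$ formulas (\ref{I1})--(\ref{I2})), you invoke the parametric implicit function theorem to get joint $C^\infty$ dependence of $t_\ee(\phi)$ on $(\ee,\phi)$ including $\ee=0$, apply the fundamental theorem of calculus in $\ee$ to factor out $\ee$, and then obtain the uniform bounds of (iii) from compactness of $[0,\ee_1(n)]\times S^1$. (Your displayed $T_\ee(\phi)=\int_0^1\partial_s t_s(\phi)\,ds$ is a slip: it yields $t_1(\phi)$, not $t_\ee(\phi)/\ee$; the correct expression is $T_\ee(\phi)=\int_0^1\partial_s t_{s\ee}(\phi)\,ds$.) The paper instead performs an induction on the order of the $\phi$-derivative using the explicit ratio formula. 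Your version avoids that bookkeeping but transfers the burden to the joint-smoothness claim, which you call the ``main obstacle'' and do not actually verify; the paper handles exactly that point by solving for the slice radial function $\rho_\ee(t,\phi,s)$ via the implicit function theorem, establishing uniform bounds on all its derivatives, and differentiating under the $d\mu(s)$-integral.

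For part (i), however, there is a genuine gap and a missed tool. The implicit function theorem gives a unique critical point near $(t,\ee)=(0,0)$; for a fixed small $\ee>0$ it does not by itself exclude further critical points of $A_{K_\ee,\phi}$ elsewhere in $(-1,1)$. You would still need, say, a uniform $C^1$-closeness argument to $A_{B^n,\phi}$ on a compact subinterval together with a check that the maximum cannot occur near $|t|=1$; you acknowledge that something is needed but do not supply it, and in any case your conclusion holds only for small $\ee$. More importantly, Lemma~\ref{lem2}(i) is asserted for every $\ee<\ee_0$, and the paper proves it at that level of generality without any smallness or smoothness: by the Brunn--Minkowski inequality $A_{K_\ee,\phi}^{1/(n-1)}$ is concave, so $A_{K_\ee,\phi}$ is unimodal; if the maximum were attained on a nondegenerate interval, the equality case would make the corresponding sections homothetic and hence (having equal area) translates, so their union is a cylinder inside $K_\ee$, and rotational symmetry forces a right circular cylinder about the $x_n$-axis, which (\ref{rho}) visibly rules out. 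That Brunn--Minkowski unimodality argument is the intended proof of (i); your IFT/perturbation approach is both weaker and more fragile here, and the omission of the unimodality observation is the main substantive miss in your proposal.
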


\begin{proof}
$\mathrm{(i)}$  The Brunn-Minkowski inequality (see \cite{Gar02} or \cite[Section~B.2]{G}) implies that for each $\phi$ the function $A_{K_{\ee},\,\phi}(t)^{1/(n-1)}$ is concave and hence $A_{K_{\ee},\,\phi}(t)$ is unimodal, i.e., increases and then decreases with $t$.  Therefore, if its maximum is not attained at a unique point, it is attained at all points in a closed interval $[t_1,t_2]$, say, $t_1\neq t_2$.  Now the equality condition of the Brunn-Minkowski inequality implies that the sections $K_{\ee}\cap (u^{\perp}+tu)$, $t\in [t_1,t_2]$ are all homothetic and hence, by convexity, the union of these sections is a (possibly slanted) cylinder.  Since $K_{\ee}$ is a body of revolution, it is easy to see that this is only possible when the cylinder is a right spherical cylinder with its axis along the $x_n$-axis.  However, this implies that $K_{\ee}$ has vertical line segments in its boundary, which clearly contradicts its definition via (\ref{rho}).

$\mathrm{(ii)}$  Let $u\in S^{n-1}$ and $t\in \mathbb R$ be such that $tu\in\inte K_{\ee}$. Note that for any $v\in  S^{n-1}\cap u^\perp$, we have $\rho_{K_\ee - tu}(v)v\in \partial(K_\ee - tu)$ and hence $x=tu +\rho_{K_\ee - tu}(v)v\in \partial K_\ee$.
Then $\rho_{K_{\ee}}(x)=1$ and hence (\ref{rho2}) yields
\begin{equation}\label{eqn_for_rho}
1= ({t^2+\rho_{\ee}^2})^{3/2} + \ee(t\cos\phi +\rho_{\ee} v_n)^3,
\end{equation}
where $\phi\in [0,\pi]$, $v=(v_1,\dots,v_n)$, and where for brevity we write $\rho_{\ee}=\rho_{K_\ee - tu}(v)$.  If we set
\begin{equation}\label{en}
e_n=u\cos\phi+w\sin\phi,
\end{equation}
where $w\in S^{n-1}\cap u^\perp$, and $s=\langle v,w\rangle$, then
$$
v_n=\langle v, e_n\rangle=\langle v, w\rangle\sin\phi=s\sin\phi.
$$
Then we can rewrite (\ref{eqn_for_rho}) as
\begin{equation}\label{rhonew}
1= ({t^2+\rho_{\ee}^2})^{3/2} + \ee(t\cos\phi +\rho_{\ee} s\sin\phi)^3,
\end{equation}
where $\phi\in [0,\pi]$ and $-1\le s\le 1$.  Thus the radial function of $(K_{\ee}-tu)\cap u^{\perp}=K_{\ee}\cap (u^{\perp}+tu)$, as a function $\rho_{\ee}=\rho_{\ee}(t,\phi,s)$, is implicitly defined by (\ref{rhonew}).  We can also regard $\rho_{\ee}$ as defined for $\phi\in S^1$ by setting
\begin{equation}\label{rhoeven}
\rho_{\ee}(t,\phi,s)=\rho_{\ee}(t,-\phi,-s),
\end{equation}
for $\phi\in [-\pi,0]$.

We claim that there is a positive $\delta_1\le \ee_0$ such that for $0<\ee<\delta_1$, we have $|t_{\ee}(\phi)|<1/4$ and the function $\rho_{\ee}=\rho_{\ee}(t,\phi,s)$ implicitly defined by (\ref{rhonew}) is infinitely differentiable with respect to each variable on $(-1/4,1/4)\times S^1\times [-1,1]$. (At the endpoints $s=-1$ and $s=1$ of $[-1,1]$ we take one-sided derivatives.)  To see this, note first that by (\ref{rho}), $K_{\ee}\rightarrow B^n$ in the Hausdorff metric as $\ee\rightarrow 0+$.  Consequently, there is a positive $\delta_0\le \ee_0$ such that for $0<\ee<\delta_0$, we have $|t_{\ee}(\phi)|<1/4$ and $3/4<\rho_{K_{\ee}}(\phi)<5/4$ for all $\phi\in S^1$.  Then $1/2<\rho_{\ee}=\rho_{K_\ee - tu}(v)<3/2$ when $|t|<1/4$ and $0<\ee<\delta_0$. Let $F_{\ee}:\R^4\rightarrow \R$ be defined by
\begin{equation}\label{FF}
F_{\ee}(x_1,x_2,x_3,y)=(x_1^2+y^2)^{3/2}+\ee(x_1\cos x_2+yx_3\sin x_2)^3-1.
\end{equation}
Then we can choose $0<\delta_1\le \delta_0$ small enough to ensure that if $0<\ee<\delta_1$, then
\begin{equation}\label{FFd}
\frac{\partial F_{\ee}}{\partial y}=3(x_1^2+y^2)^{1/2}y+3\ee(x_1\cos x_2+yx_3\sin x_2)^2x_3\sin x_2> 3/4-3(7/4)^2\ee>0,
\end{equation}
for all $(x_1,x_2,x_3,y)\in (-1/4,1/4)\times[-\pi, \pi]\times[-1,1]\times (1/2,3/2)$.  Comparing (\ref{rhonew}) and (\ref{FF}) and noting that $F_{\ee}$ is periodic with period $2\pi$ in the variable $x_2$, we see that the implicit function theorem implies that for all $0<\ee<\delta_1$, there is a unique function $\rho_{\ee}=\rho_{\ee}(t,\phi,s)$ that satisfies (\ref{rhonew}) and is infinitely differentiable with respect to each variable on $(-1/4,1/4)\times S^1\times [-1,1]$. This proves the claim.

Differentiation of (\ref{rhonew}) with respect to $t$ yields
\begin{equation}\label{partrho}
\frac{\partial \rho_{\ee}}{\partial t}=-\frac{(t^2+\rho_{\ee}^2)^{1/2} t + \ee(t \cos\phi +\rho_{\ee}  s \sin\phi)^2\cos\phi}{(t^2+\rho_{\ee}^2)^{1/2} \rho_{\ee} + \ee(t \cos\phi +\rho_{\ee}   s \sin\phi)^2  s\sin\phi}.
\end{equation}
Now $\rho_{\ee}$, $s$, and $t$ are bounded above uniformly in $\ee$ for $0<\ee<\delta_0$.  Also, since $\rho_{\ee}>1/2$ when $|t|<1/4$, the presence of the term $(t^2+\rho_{\ee}^2)^{1/2} \rho_{\ee}$ in the previous denominator guarantees that this denominator is bounded below by a positive constant, uniformly in $\ee$ for $0<\ee<\delta_1$. (We can use $\delta_1$ here in view of (\ref{FFd}).) Moreover, the same denominator appears in $\partial \rho_{\ee}/\partial \phi$ and $\partial \rho_{\ee}/\partial s$, and powers of it appear in all higher derivatives with respect to $t$, $\phi$, and $s$.  Therefore the partial derivatives of $\rho_{\ee}$ of any order are each bounded uniformly in $\ee$ for $0<\ee<\delta_1$.

Next, we observe that if $\alpha$ is the angle between $v$ and the $w$ defined by (\ref{en}), then the set of points in $S^{n-1}\cap u^{\perp}$ having the same fixed angle $\alpha$ at $o$ is an $(n-3)$-sphere of radius $\sin\alpha$.  Then $s=\sin\alpha$ and since $A_{K_{\ee},\phi}(t) = A_{K_{\ee}-tu,\phi}(0)$, we have
\begin{eqnarray}\label{A}
A_{K_{\ee},\phi}(t) =V((K_{\ee}-tu)\cap u^{\perp})&=&\frac{1}{n-1} \int_{S^{n-1}\cap u^\perp} \rho_{\ee}(t,\phi,\cos\alpha)^{n-1}\,dv\nonumber\\
&=&\frac{\omega_{n-2}}{n-1} \int_{0}^{\pi} \rho_{\ee}(t,\phi, \cos\alpha)^{n-1}\sin^{n-3}\alpha\,d\alpha\nonumber\\
&=&\frac{\omega_{n-2}}{n-1}\int_{-1}^1\rho_{\ee}(t,\phi, s)^{n-1}\,d\mu(s),
\end{eqnarray}
where $d\mu(s) = (1-s^2)^{(n-4)/2} ds$. (The geometry behind the substitution in the integral is depicted in \cite[Figure~8.4, p.~314]{G}; compare the proof of \cite[Theorem~C.2.9]{G}.  See also \cite[Lemma~1.3.1(ii)]{Gr}.)   Recalling that
the integrand in (\ref{A}) can be regarded as defined for $\phi\in S^1$ via (\ref{rhoeven}), we see that we can regard $A_{K_{\ee},\phi}(t)$ as an even function of $\phi\in S^1$ in the sense that $A_{K_{\ee},-\phi}(t) = A_{K_{\ee},\phi}(t)$ for all $\phi\in S^1$.

The restriction $n\ge 3$ allows differentiation under the integral sign in integrals with respect to $d\mu(s)$ such as (\ref{A}) for which the integrand is continuously differentiable with respect to its variables.  We shall use this several times in the sequel without further comment.

At the unique point $t=t_\ee(\phi)$ where $A_{K_{\ee},\phi}(t)$ attains its maximum, we have, by differentiating (\ref{A}) with respect to $t$ and using (\ref{partrho}), that
\begin{eqnarray}\label{first_deriv}
A_{K_{\ee},\phi}'(t)&=&\omega_{n-2}\int_{-1}^1 \rho_{\ee}(t,\phi,s)^{n-2} \frac{\partial \rho_{\ee}}{\partial t}(t,\phi,s) \,d\mu(s)\nonumber\\
&=&-\omega_{n-2}\int_{-1}^1 \rho_{\ee}^{n-2}\frac{(t^2+\rho_{\ee}^2)^{1/2} t + \ee(t \cos\phi +\rho_{\ee}  s \sin\phi)^2\cos\phi}{(t^2+\rho_{\ee}^2)^{1/2} \rho_{\ee} + \ee(t \cos\phi +\rho_{\ee}   s \sin\phi)^2  s\sin\phi }\,d\mu(s)=0,
\end{eqnarray}
where $\rho_{\ee} = \rho_{\ee}(t,\phi,s)$.

When $\ee=0$, we have $K_{\ee}=B^n$, and from (\ref{rhonew}), $\rho_0=\sqrt{1-t^2}$.  Then differentiation of (\ref{first_deriv}) with $\ee=0$, with respect to $t$, yields
$$A_{K_{\ee},\phi}''(t)=-\omega_{n-2}\int_{-1}^1(1-t^2)^{(n-3)/2}
\left(1-\frac{(n-3)t^2}{1-t^2}\right)\,d\mu(s)<0$$
when $n=3$ and when $n\ge 4$ and $|t|<1/\sqrt{n-2}$.  Since $\rho_{\ee}$ and all its derivatives are continuous with respect to $\ee$, we have $A_{K_{\ee},\phi}''(t)<0$ for all sufficiently small $\ee>0$, $|t|<1/(2\sqrt{n-2})$, say, and $\phi\in S^1$. Now for sufficiently small $\ee>0$, we have $|t_{\ee}(\phi)|<1/(2\sqrt{n-2})$, where $t_{\ee}(\phi)$ was defined in (i).  Hence, by the implicit function theorem, for sufficiently small $\ee>0$, the unique $t$ with $|t|<1/(2\sqrt{n-2})$ that satisfies (\ref{first_deriv}) is $t=t_{\ee}(\phi)$, and moreover $t_{\ee}(\phi)\in C^{\infty}(S^1)$.  Summarizing, we have shown that there is a positive $\ee_1(n)\le \delta_1$ such that for all $0<\ee<\ee_1(n)$, $t=t_{\ee}(\phi)$ satisfies (\ref{first_deriv}) and $t_\ee(\phi)\in C^{\infty}(S^1)$.

Let $0<\ee<\ee_1(n)$.  Rearranging (\ref{first_deriv}), we obtain
\begin{equation}\label{newt}
t_\ee(\phi)=-\frac{\int_{-1}^1\Phi(t_{\ee}(\phi),\phi,s)\,d\mu(s)}
{\int_{-1}^1\Psi(t_{\ee}(\phi),\phi,s)\,d\mu(s)}\ee=T_{\ee}(\phi)\ee,
\end{equation}
say, where
\begin{equation}\label{I1}
\Phi=\frac{\rho_{\ee}^{n-2}\,(t_\ee(\phi)\cos\phi +\rho_{\ee} s \sin\phi)^2\cos\phi}{(t_\ee(\phi)^2+\rho_{\ee}^2)^{1/2} \rho_{\ee} + \ee(t_\ee(\phi) \cos\phi +\rho_{\ee}   s\sin\phi)^2  s \sin\phi}
\end{equation}
and
\begin{equation}\label{I2}
\Psi=\frac{\rho_{\ee}^{n-2}\, (t_\ee(\phi)^2+\rho_{\ee}^2)^{1/2}}{(t_\ee(\phi)^2+\rho_{\ee}^2)^{1/2} \rho_{\ee} + \ee(t_\ee(\phi)\cos\phi +\rho_{\ee}s \sin\phi)^2 s \sin\phi}.
\end{equation}
Here $\rho_{\ee} = \rho_{\ee}(t_\ee(\phi),\phi,s)$ and $\Phi$ and $\Psi$ also involve $t=t_{\ee}(\phi)$.  Nevertheless, we have shown that $t_\ee(\phi)=T_{\ee}(\phi)\ee$, say, where $T_{\ee}(\phi)\in C^{\infty}(S^1)$ because $t_{\ee}(\phi)\in C^{\infty}(S^1)$.  This completes the proof of (ii).

$\mathrm{(iii)}$  Let $0<\ee<\ee_1(n)$. In view of (\ref{newt}), (\ref{I1}), and (\ref{I2}) and the previously established bounds $|t_{\ee}(\phi)|<1/4$ and $1/2<\rho_{\ee}<3/2$ that hold when $0<\ee<\ee_1(n)$, there is a constant $c_1(0,n)$ such that $|T_{\ee}(\phi)|\le c_1(0,n)$ for all $\phi\in S^1$.  This proves (\ref{ds}) for $k=0$.

To prove (\ref{ds}) for $k=1$, note that from (\ref{newt}) we obtain
$$T_{\ee}(\phi) \int_{-1}^1  \Psi(T_{\ee}(\phi)\ee,\phi,s)\,d\mu(s)  + \int_{-1}^1 \Phi(T_{\ee}(\phi)\ee,\phi,s )\,d\mu(s)=0.$$
Differentiating this equation with respect to $\phi$ and denoting by $\Phi_i$ and $\Psi_i$ the derivatives of $\Phi$ and $\Psi$ with respect to the $i$th variable, $i=1,2$, we get
$$
T_{\ee}'(\phi)\int_{-1}^1\Psi\,d\mu(s) +T_{\ee}(\phi)\int_{-1}^1  (\Psi_1 T_{\ee}'(\phi)\ee+\Psi_2)\,d\mu(s)+ \int_{-1}^1(\Phi_1 T_{\ee}'(\phi)\ee+ \Phi_2 )  \,d\mu(s)=0.
$$
This yields
\begin{equation}\label{t'}
T_{\ee}'(\phi)=-\frac{\int_{-1}^1\left(T_{\ee}(\phi)\Psi_2+\Phi_2\right)\,d\mu(s)}
{\int_{-1}^1 \left(\Psi+\ee(T_{\ee}(\phi)\Psi_1  + \Phi_1)\right)\,d\mu(s)}.
\end{equation}
We have already shown that $T_{\ee}(\phi)$, $\rho_{\ee}$, and the partial derivatives of $\rho_{\ee}$ with respect to $\phi$ of any order are each bounded uniformly in $\ee$ for $0<\ee<\ee_1(n)$.  Furthermore we have that $\rho_{\ee}$ is bounded below by a positive constant, uniformly in $\ee$. It then follows from (\ref{I1}) and (\ref{I2}) that the partial derivatives $\Phi_i$ and $\Psi_i$, $i=1,2$, are all bounded uniformly in $\ee$.  From (\ref{I2}) we also deduce that $\Psi$ is bounded below by a positive constant, uniformly in $\ee$. These facts and (\ref{t'}) show that $T_{\ee}'(\phi)$ is bounded uniformly in $\ee$, proving (\ref{ds}) for $k=1$.

Differentiation of (\ref{I1}) and (\ref{I2}) with respect to the variables $t_{\ee}(\phi)$ and $\phi$, together with the previously established facts just mentioned, reveals that the partial derivatives of $\Phi(t_{\ee}(\phi),\phi,s)$ and $\Psi(t_{\ee}(\phi),\phi,s)$ with respect to its first and second variables, of any order, are each bounded uniformly in $\ee$ for $0<\ee<\ee_1(n)$.  This and the fact that $\Psi$ is bounded below by a positive constant, uniformly in $\ee$, allow the proof of (iii) to be completed by repeatedly differentiating (\ref{t'}) with respect to $\phi$ and using induction on $k$.
\end{proof}

\begin{lem}\label{lem3}
Let $n\ge 3$, let $K_{\ee}$ be as in Lemma~\ref{lem1}, and let $0<\ee<\ee_1(n)$.  Then $m_{K_\ee}\in C^{\infty}_e(S^{n-1})$ and there is a rotationally symmetric function $g_{\ee}\in C^{\infty}_e(S^{n-1})$ such that
\begin{equation}\label{LL}
(n-1)(R^{-1}m_{K_{\ee}})(u)=1+(R^{-1}g_\ee)(u)\ee,
\end{equation}
for $u\in S^{n-1}$, where $R$ is the spherical Radon transform.  Moreover, there is a constant $c_2(k,n)$ such that
\begin{equation}\label{gg}
\left|\frac{d^k g_\ee(\phi)}{d\phi^k}\right|\le c_2(k,n),
\end{equation}
for all $0<\ee<\ee_1(n)$ and $\phi \in S^1$.
\end{lem}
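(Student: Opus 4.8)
The plan is to build everything on the identity $m_{K_\ee}(\phi)=A_{K_{\ee},\phi}(t_\ee(\phi))$ together with the facts already assembled in Lemma~\ref{lem2}. First I would record that $m_{K_\ee}\in C^\infty_e(S^{n-1})$. From Lemma~\ref{lem2}(ii), $t_\ee(\phi)=T_\ee(\phi)\ee\in C^\infty(S^1)$ with $|t_\ee(\phi)|<1/4$, and from the proof of Lemma~\ref{lem2} the map $(t,\phi)\mapsto A_{K_{\ee},\phi}(t)$ is $C^\infty$ on $(-1/4,1/4)\times S^1$ (via (\ref{A}), differentiation under the integral sign, which is legitimate for $n\ge3$, and the smoothness of $\rho_\ee$); hence the composition $m_{K_\ee}(\phi)=A_{K_{\ee},\phi}(t_\ee(\phi))$ lies in $C^\infty(S^1)$. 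Since $(-u)^\perp=u^\perp$, the very definition of $m_K$ gives $m_{K_\ee}(-u)=m_{K_\ee}(u)$, so $m_{K_\ee}$ is even and rotationally symmetric, i.e.\ $m_{K_\ee}\in C^\infty_e(S^{n-1})$. Now \emph{define}
$$g_\ee:=\frac{n-1}{\ee}\,\bigl(m_{K_\ee}-\kappa_{n-1}\bigr),$$
a rotationally symmetric function in $C^\infty_e(S^{n-1})$.

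Granting this definition, (\ref{LL}) is immediate: $(n-1)m_{K_\ee}=(n-1)\kappa_{n-1}+\ee\,g_\ee$, and since $R$ is a linear bijection of $C^\infty_e(S^{n-1})$ and $R(1)$ equals the $(n-2)$-measure of $S^{n-1}\cap u^\perp$, namely $\omega_{n-1}=(n-1)\kappa_{n-1}$, we have $R^{-1}\bigl((n-1)\kappa_{n-1}\bigr)=1$; applying $R^{-1}$ to the displayed equality and dividing by $n-1$ yields $(n-1)(R^{-1}m_{K_\ee})(u)=1+(R^{-1}g_\ee)(u)\ee$.

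It remains to prove the uniform estimates (\ref{gg}); this is the only substantial step. Put $\sigma_\ee(\phi,s):=\rho_\ee(t_\ee(\phi),\phi,s)$. Evaluating (\ref{A}) at $t=t_\ee(\phi)$ gives $(n-1)m_{K_\ee}(\phi)=\omega_{n-2}\int_{-1}^1\sigma_\ee^{n-1}\,d\mu(s)$, while the substitution used in deriving (\ref{A}) also expresses the $(n-2)$-measure of $S^{n-2}$ as $\omega_{n-2}\int_{-1}^1 d\mu(s)$, so that $(n-1)\kappa_{n-1}=\omega_{n-1}=\omega_{n-2}\int_{-1}^1 d\mu(s)$; subtracting,
$$g_\ee(\phi)=\frac{\omega_{n-2}}{\ee}\int_{-1}^1\bigl(\sigma_\ee^{n-1}-1\bigr)\,d\mu(s).$$
Since $\int_{-1}^1 d\mu(s)<\infty$ for $n\ge3$ and differentiation under this integral is permitted, (\ref{gg}) will follow once we show that for each $k\ge0$ the quantity $\ee^{-1}\,\partial_\phi^k\bigl(\sigma_\ee^{n-1}-1\bigr)$ is bounded uniformly in $0<\ee<\ee_1(n)$ and $(\phi,s)$. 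I would obtain this from three facts. (i) From (\ref{rhonew}) with $t=\ee T_\ee(\phi)$ and the uniform bounds $|t_\ee(\phi)|<1/4$, $1/2<\sigma_\ee<3/2$ of Lemma~\ref{lem2}, one solves
$$\bigl(\ee^2 T_\ee(\phi)^2+\sigma_\ee^2\bigr)^{3/2}=1-\ee\bigl(\ee T_\ee(\phi)\cos\phi+\sigma_\ee s\sin\phi\bigr)^3$$
for $\sigma_\ee^2=1+O(\ee)$ uniformly, whence $\sigma_\ee=1+O(\ee)$ and in particular $\sigma_\ee^{n-1}-1=O(\ee)$. (ii) Differentiating (\ref{rhonew}) once in $\phi$ and solving for $\partial\rho_\ee/\partial\phi$ exhibits it as $\ee$ times a bounded expression in $t,\phi,s,\rho_\ee$, divided by the denominator appearing in (\ref{partrho}), which by Lemma~\ref{lem2} is bounded below by a positive constant uniformly in $\ee$; since this numerator always retains the factor $\ee$ while $\rho_\ee$ and all its partial derivatives stay bounded (Lemma~\ref{lem2}), an induction on the order of differentiation shows that \emph{every} mixed partial $\partial_t^a\partial_\phi^b\rho_\ee$ with $b\ge1$, and hence also $\partial_t^a\partial_\phi^b(\rho_\ee^{n-1})$ with $b\ge1$, is $O(\ee)$. (iii) By Lemma~\ref{lem2}(iii), every $\phi$-derivative of $t_\ee=\ee T_\ee$ is $O(\ee)$. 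Now expand $\partial_\phi^k(\sigma_\ee^{n-1})=\partial_\phi^k\bigl[(\rho_\ee^{n-1})(t_\ee(\phi),\phi,s)\bigr]$ by the chain rule: each term is a product of some $\partial_t^a\partial_\phi^b(\rho_\ee^{n-1})$, evaluated at $t=t_\ee(\phi)$, with $a$ factors $t_\ee^{(j_1)}(\phi),\dots,t_\ee^{(j_a)}(\phi)$ where $j_i\ge1$ and $b+\sum_i j_i=k$. For $k\ge1$ each such term has either $b\ge1$, hence a factor $O(\ee)$ by (ii), or $b=0$ and $a\ge1$, hence a factor $O(\ee)$ by (iii), while all remaining factors are bounded uniformly; thus $\partial_\phi^k(\sigma_\ee^{n-1})=O(\ee)$ for $k\ge1$. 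Together with (i) for $k=0$, this gives $\partial_\phi^k(\sigma_\ee^{n-1}-1)=O(\ee)$ for all $k\ge0$, and therefore (\ref{gg}).

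The main obstacle is precisely the bookkeeping in (ii) and in the concluding chain-rule step: one must check that after any number of differentiations in $\phi$ every surviving term still carries an explicit factor $\ee$, with all other factors bounded uniformly in $\ee$. The structural reasons this works are that $t_\ee=\ee T_\ee$, so all its $\phi$-derivatives are $O(\ee)$ by Lemma~\ref{lem2}(iii), and that differentiating the defining relation (\ref{rhonew}) in $\phi$ — as opposed to in $t$ — produces only terms proportional to $\ee$; the uniform positive lower bound on the denominator in (\ref{partrho}) and the uniform bounds on $\rho_\ee$ and all its derivatives, both established in Lemma~\ref{lem2}, keep the remaining factors under control throughout.
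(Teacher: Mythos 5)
Your proof is correct, and while it follows the same overall strategy as the paper (use $m_{K_\ee}(\phi)=A_{K_{\ee},\phi}(t_\ee(\phi))$ together with the integral formula (\ref{A}), subtract off the $\ee=0$ value $\kappa_{n-1}$, divide by $\ee$, and establish uniform $C^\infty$ bounds on the result), the argument for the crucial uniform bounds (\ref{gg}) is organized in a genuinely different way. The paper works by explicit algebra: from (\ref{rhonew}) it derives the identity
$$1-\rho_{\ee}=\frac{(t\cos\phi+\rho_{\ee}s\sin\phi)^3\bigl(1+(t^2+\rho_{\ee}^2)^{3/2}\bigr)}{(1+\rho_{\ee})\bigl(1+t^2+\rho_{\ee}^2+(t^2+\rho_{\ee}^2)^2\bigr)}\,\ee+\frac{t^2}{1+\rho_{\ee}},$$
plugs in $t=t_\ee(\phi)=T_\ee(\phi)\ee$ to obtain $\rho_{\ee}(t_\ee(\phi),\phi,s)^{n-1}=1+f_\ee(\phi,s)\ee$ with $f_\ee$ given by an explicit closed-form expression, and then bounds the $\phi$-derivatives of $f_\ee$ by direct differentiation of that expression together with the bounds from Lemma~\ref{lem2}. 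You instead define $g_\ee$ abstractly as $\tfrac{n-1}{\ee}(m_{K_\ee}-\kappa_{n-1})$ and prove the estimates by tracking $O(\ee)$ factors through the chain rule, with the structural observation that $\partial_\phi\rho_\ee$ (and hence every mixed partial $\partial_t^a\partial_\phi^b\rho_\ee$ with $b\ge1$) carries a factor $\ee$ because the $\phi$-dependence of the defining relation (\ref{rhonew}) resides entirely in the $\ee$-term, combined with Lemma~\ref{lem2}(iii), which supplies the $O(\ee)$ factor when the chain rule hits $t_\ee$ directly. Both routes produce the same $g_\ee$ (once one observes that the paper's $g_\ee=\omega_{n-2}\int_{-1}^1 f_\ee\,d\mu$ and $m_{K_\ee}=\kappa_{n-1}+g_\ee\ee/(n-1)$ imply your defining identity). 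The paper's explicit factorization is more concrete and term-by-term checkable and avoids any residual bookkeeping of $O(\ee)$ through repeated differentiation; your version is leaner in that it never writes down $U_\ee$ or $f_\ee$, and it isolates the structural reason the factorization exists, namely that $(\ref{rhonew})$ at $\ee=0$ has no $\phi$-dependence at all. One small point worth making explicit in your write-up is the verification that $R^{-1}\bigl((n-1)\kappa_{n-1}\bigr)=1$, which you derive from $R(1)=\omega_{n-1}=(n-1)\kappa_{n-1}$; this matches the paper's identity (\ref{N2}), so there is no discrepancy, but it deserves a sentence.
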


\begin{proof}
Let $0<\ee<\ee_1(n)$.  In the notation of the previous lemma, using (\ref{A}), we have
\begin{equation}\label{AA}
m_{K_{\ee}}(\phi)=\max_{t\in \mathbb R} A_{K_{\ee},\phi}(t)=A_{K_{\ee},\phi}(t_\ee(\phi)) = \frac{\omega_{n-2}}{n-1}\int_{-1}^1 \rho_{\ee}(t_\ee(\phi),\phi,s)^{n-1}\,d\mu(s),
\end{equation}
for $\phi\in S^1$. We know from the paragraph after (\ref{rhonew}) that  $\rho_{\ee}(t,\phi,s)$ is infinitely differentiable with respect to each of its variables and also that $t_{\ee}(\phi)\in C^{\infty}(S^1)$ by Lemma~\ref{lem2}(ii). It follows that $m_{K_{\ee}}(\phi)\in C^{\infty}_e(S^1)$ and therefore $m_{K_{\ee}}(u)\in C^{\infty}_e(S^{n-1})$.

We wish to rewrite the previous equation in a more convenient form.  To this end, we first use (\ref{rhonew}) to obtain
$$
1-\rho_{\ee} =\frac{(t \cos\phi +\rho_{\ee} s\sin\phi)^3(1+(t^2+\rho_{\ee}^2)^{3/2})}
{(1+\rho_{\ee})(1+t^2+\rho_{\ee}^2+(t^2+\rho_{\ee}^2)^2)}\ee+\frac{t^2}{1+\rho_{\ee}}.
$$
When $t=t_\ee(\phi)$, this and (\ref{t}) imply that
$$\rho_{\ee}(t_\ee(\phi),\phi,s)=1-U_{\ee}(\phi,s)\ee,$$
say, where
\begin{equation}\label{U}
U_{\ee}(\phi,s)=\frac{\left(T_\ee(\phi)\ee\cos\phi +\rho_{\ee} s\sin\phi\right)^3\left(1+(T_\ee(\phi)^2\ee^2+\rho_{\ee}^2)^{3/2}\right)}
{(1+\rho_{\ee})\left(1+T_\ee(\phi)^2\ee^2+\rho_{\ee}^2+
\left(T_\ee(\phi)^2\ee^2+\rho_{\ee}^2\right)^2\right)}+
\frac{T_\ee(\phi)^2\ee}{1+\rho_{\ee}}.
\end{equation}
Therefore
\begin{equation}\label{fe}
\rho_{\ee}(t_\ee(\phi),\phi,s)^{n-1} = 1+f_\ee(\phi,s)\ee,
\end{equation}
say, where
\begin{equation}\label{ff}
f_\ee(\phi,s)=\sum_{j=1}^{n-1}(-1)^j\binom{n-1}{j} U_{\ee}(\phi,s)^j\ee^{j-1}.
\end{equation}
By Lemma~\ref{lem2}(iii), $T_{\ee}(\phi)$ and all its derivatives are bounded above uniformly in $\ee$.  Also, we showed in the proof of Lemma~\ref{lem2}(ii) (paragraph after (\ref{rhonew})) that $\rho_{\ee}$ and all its derivatives with respect to $\phi$ are bounded uniformly in $\ee$, and that $\rho_{\ee}$ is bounded below by a positive constant, uniformly in $\ee$.  These facts and repeated differentiation with respect to $\phi$ of (\ref{ff}) and (\ref{U}) show that for every $k=0,1,\dots$, there is a constant $a(k,n)$ such that
\begin{equation}\label{f}
\left|\frac{d^k f_\ee(\phi,s)}{d\phi^k}\right|\le a(k,n),
\end{equation}
for all $0<\ee<\ee_1(n)$, all $\phi\in S^1$, and all $-1\le s\le 1$.

From (\ref{AA}) and (\ref{fe}), we obtain
\begin{equation}\label{N1}
A_{K_{\ee},\,\phi}(t_\ee(\phi))= \frac{\omega_{n-2}}{n-1}\int_{-1}^1\left(1 +f_\ee(\phi, s) \ee \right)\,d\mu(s).
\end{equation}
Note that when $\ee=0$, $K_{\ee}=B^n$, $t_{\ee}(\phi)=0$, and $A_{K_{\ee},\,\phi}(t_\ee(\phi))=\kappa_{n-1}$.  Hence, or by direct integration, we obtain
\begin{equation}\label{N2}
\frac{\omega_{n-2}}{n-1}\int_{-1}^11\,d\mu(s)=\kappa_{n-1}.
\end{equation}
Let
\begin{equation}\label{gdef}
g_\ee(u)=g_\ee(\phi)=\omega_{n-2}\int_{-1}^1 f_\ee(\phi,s)\,d\mu(s),
\end{equation}
for $u\in S^{n-1}$ and $\phi\in S^1$.  From (\ref{N1}) and (\ref{N2}) we obtain
$$
m_{K_\ee}(u)=\max_{t\in \mathbb R}A_{K_\ee,u}(t)=\kappa_{n-1}+g_\ee(u)\ee/(n-1),
$$
for all $u\in S^{n-1}$.  Now $g_{\ee}(u)\in C^{\infty}_e(S^{n-1})$ follows from $m_{K_{\ee}}(u)\in C^{\infty}_e(S^{n-1})$.  We also have
$$(n-1)(R^{-1}m_{K_{\ee}})(u)=R^{-1}\left((n-1)\kappa_{n-1}\right)
+(R^{-1}g_\ee)(u)\ee=1+(R^{-1}g_\ee)(u)\ee,$$
for all $u\in S^{n-1}$.  Finally, (\ref{gg}) follows directly from (\ref{f}) and (\ref{gdef}).
\end{proof}

\begin{lem}\label{lem4}
Let $p\in \{1,\dots,n-1\}$ and let $g\in C^{\infty}(\R^n\setminus\{o\})$ be homogeneous of degree $-n+p$.  Let
$$
G_{u}(z)=(1-z^2)^{(n-3)/2}\int_{S^{n-1}\cap u^\perp} g(zu +\sqrt{1-z^2}\,v) \,dv,
$$
for $-1\le z\le 1$.  If $g$ is even, then
$$
\widehat{g}(u)=\left\{\begin{array}{ll} (-1)^{(p-1)/2}\pi G_{u}^{(p-1)}(0), &
\mbox{if $p$ is odd,}\\
(-1)^{p/2}(p-1)!\left(
 \int_{-1}^1|z|^{-p}\Bigl(G_{u}(z)-\sum_{k=0}^{p-1}G_{u}^{(k)}(0)z^k/k!\right)\,dz+\\
 +2\sum\{G_{u}^{(k)}(0)/(k!(1+k-p)): k=0,\dots, p-2, k~\text{even}\}\Bigr), & \mbox{if $p$ is even,}
\end{array}\right.
$$
for $u\in S^{n-1}$.  If $g$ is odd, then
$$
\widehat{g}(u)=\left\{\begin{array}{ll} (-1)^{p/2}\pi G_{u}^{(p-1)}(0)i, &
\mbox{if $p$ is even,}\\
(-1)^{(p+1)/2}(p-1)!\left(
 \int_{-1}^1|z|^{-p}\,\sgn z\Bigl(G_{u}(z)-\sum_{k=0}^{p-1}G_{u}^{(k)}(0)z^k/k!\right)\,dz+\\
 +2\sum\{G_{u}^{(k)}(0)/(k!(1+k-p)):k=1,\dots, p-2, k~\text{odd}\}\Bigr)i, & \mbox{if $p$ is odd,}
\end{array}\right.
$$
for $u\in S^{n-1}$.
\end{lem}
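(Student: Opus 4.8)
The plan is to pass to polar coordinates, reducing the computation of $\widehat g(u)$ to a one-variable calculation with distributions on the line. First, since $g$ is homogeneous of degree $-n+p$ with $1\le p\le n-1$, it is locally integrable and of at most polynomial growth, so $\widehat g$ is a tempered distribution, homogeneous of degree $-p$; as $-p>-n$ there is no nonzero homogeneous distribution of degree $-p$ supported at the origin, and as $g\in C^{\infty}(\R^n\setminus\{o\})$ the distribution $\widehat g$ agrees on $\R^n\setminus\{o\}$ with a function in $C^{\infty}(\R^n\setminus\{o\})$ (compare \cite[Lemma~3.16]{Ko}), so $\widehat g(u)$ is a well-defined number for each $u\in S^{n-1}$. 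Writing $x=r\theta$ with $r>0$ and $\theta\in S^{n-1}$, using $g(r\theta)=r^{-n+p}g(\theta)$, and evaluating the radial integral distributionally via $\int_0^{\infty}r^{p-1}e^{-irs}\,dr=\Gamma(p)\,(0^+ + is)^{-p}$, where $\Gamma(p)=(p-1)!$ and $(0^+ + is)^{-p}:=\lim_{\ee\to 0^+}(\ee+is)^{-p}$, leads to
$$
\widehat g(u)=\Gamma(p)\int_{S^{n-1}}g(\theta)\,\bigl(0^+ + i\langle\theta,u\rangle\bigr)^{-p}\,d\theta.
$$
Decomposing $S^{n-1}$ into the $(n-2)$-spheres $\{\theta\in S^{n-1}:\langle\theta,u\rangle=z\}$ of radius $\sqrt{1-z^2}$, $z\in[-1,1]$, converts the integral over each slice into $G_u(z)$ and yields the master formula
$$
\widehat g(u)=\Gamma(p)\int_{-1}^1\bigl(0^+ + iz\bigr)^{-p}\,G_u(z)\,dz,
$$
in which $G_u$ is supported in $[-1,1]$ and, since $zu+\sqrt{1-z^2}\,v$ is a unit vector for all $z$ and $v$, is $C^{\infty}$ near $z=0$.

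Next I would unravel the distribution $(0^+ + iz)^{-p}$. Since $\ee+iz=i(z-i\ee)$ and $z-i\ee$ lies in the open lower half-plane, the principal branch gives $(0^+ + iz)^{-p}=e^{-ip\pi/2}(z-i0)^{-p}$, and the classical identity
$$
(z-i0)^{-p}=\mathrm{Pf}\,(z^{-p})+\frac{(-1)^{p-1}}{(p-1)!}\,i\pi\,\delta^{(p-1)}(z)
$$
splits the master formula into a finite-part term and a $\delta$-derivative term. The decisive point is a parity argument: substituting $-v$ for $v$ in the definition of $G_u$ shows that $G_u$ is even when $g$ is even and odd when $g$ is odd, whereas $\mathrm{Pf}\,(z^{-p})$ has the parity of $p$ and $\delta^{(p-1)}$ has the parity of $p-1$. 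Hence, when $g$ is even and $p$ is odd, or $g$ is odd and $p$ is even, only the $\delta$-term survives, and its contribution to $\widehat g(u)$ is
$$
\Gamma(p)\,e^{-ip\pi/2}\,\frac{(-1)^{p-1}}{(p-1)!}\,i\pi\,(-1)^{p-1}G_u^{(p-1)}(0)=e^{-ip\pi/2}\,i\pi\,G_u^{(p-1)}(0),
$$
which equals $(-1)^{(p-1)/2}\pi G_u^{(p-1)}(0)$ when $p$ is odd, since $e^{-ip\pi/2}=(-1)^{(p-1)/2}(-i)$, and $(-1)^{p/2}\pi G_u^{(p-1)}(0)i$ when $p$ is even, since $e^{-ip\pi/2}=(-1)^{p/2}$. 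This is the first alternative in each of the two displayed formulas.

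In the two remaining cases only the finite-part term survives; there $z^{-p}=|z|^{-p}$ if $p$ is even and $z^{-p}=|z|^{-p}\sgn z$ if $p$ is odd, and $G_u^{(k)}(0)=0$ for every $k$ whose parity differs from that of $g$. Evaluating $\mathrm{Pf}\,(z^{-p})$ on $G_u$ by subtracting the degree-$(p-1)$ Taylor polynomial of $G_u$ at $0$ and using that $G_u$ is supported in $[-1,1]$ gives
$$
\Gamma(p)\,e^{-ip\pi/2}\left(\int_{-1}^1 z^{-p}\Bigl(G_u(z)-\sum_{k=0}^{p-1}\frac{G_u^{(k)}(0)}{k!}z^k\Bigr)dz+\sum_{k=0}^{p-1}\frac{G_u^{(k)}(0)}{k!}\,\mathrm{Pf}\!\int_{-1}^1 z^{k-p}\,dz\right),
$$
and the elementary evaluation that $\mathrm{Pf}\int_{-1}^1 z^{k-p}\,dz$ is $0$ when $k-p$ is odd and $2/(1+k-p)$ when $k-p$ is even (the value $k=p-1$, which would make the denominator vanish, not arising among the nonzero terms, since there $k-p$ is odd) reproduces precisely the correction sums in the statement; combined with $\Gamma(p)\,e^{-ip\pi/2}=(-1)^{p/2}(p-1)!$ for $p$ even and $\Gamma(p)\,e^{-ip\pi/2}=(-1)^{(p+1)/2}(p-1)!\,i$ for $p$ odd, this gives the second alternative in each displayed formula.

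The step I expect to be the main obstacle is the rigorous justification of the master formula: because $g$ is not integrable at infinity, the change to polar coordinates and the interchange of integrations have to be performed against Schwartz functions, interpreting the radial integral as the tempered distribution $\Gamma(p)\,(0^+ + is)^{-p}$ and identifying the resulting distributional pairing with the pointwise value $\widehat g(u)$ by means of the regularity of $\widehat g$ away from the origin. Once the master formula is in hand, the remainder is routine manipulation of one-variable distributions together with the bookkeeping of the constants $e^{-ip\pi/2}$, which is also the source of the factor $i$ appearing exactly in the two cases with $g$ odd.
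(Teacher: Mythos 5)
Your derivation is correct, and it reconstructs from scratch what the paper obtains by citation. The paper's own proof of Lemma~\ref{lem4} is a single reduction: it quotes the general formulas \cite[Equations (3.6) and (3.7), p.~395]{GYY} for $\widehat g(u)$ when $g\in C^\infty(S^{n-1})$ is extended to be homogeneous of degree $-n+p$, and then takes real or imaginary parts using the fact that the Fourier transform of a real even (resp.\ odd) function is real (resp.\ purely imaginary). You instead rederive those formulas: polar coordinates plus the distributional identity $\int_0^\infty r^{p-1}e^{-irs}\,dr=\Gamma(p)(0^++is)^{-p}$, followed by slicing $S^{n-1}$ orthogonally to $u$, give the master formula $\widehat g(u)=\Gamma(p)\int_{-1}^1(0^++iz)^{-p}G_u(z)\,dz$; the Sokhotski--Plemelj splitting $(z-i0)^{-p}=\mathrm{Pf}(z^{-p})+\frac{(-1)^{p-1}}{(p-1)!}i\pi\,\delta^{(p-1)}$ then separates a finite-part term from a $\delta$-derivative term, and since $G_u$ inherits the parity of $g$ while $\mathrm{Pf}(z^{-p})$ and $\delta^{(p-1)}$ have opposite parities, exactly one term survives in each of the four cases; your bookkeeping of $\Gamma(p)e^{-ip\pi/2}$ and of the regularized moments $\mathrm{Pf}\int_{-1}^1 z^{k-p}\,dz$ matches the stated constants. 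What the paper's route buys is brevity and the outsourcing of the real technical work---the distributional justification of the polar decomposition and the precise normalization of the finite part---to \cite{GYY}. What your route buys is self-containment and a more transparent reason for the parity dichotomy: rather than invoking ``take the real or imaginary part,'' you exhibit directly which of the two distributional pieces annihilates $G_u$. You correctly flag the one genuinely delicate step, the master formula; that is precisely the content supplied by the cited reference.
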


\begin{proof}
The Fourier transform of a real even function is real and the Fourier transform of a real odd function is purely imaginary.  Using these well-known facts, the four formulas for $\widehat{g}(u)$ in the statement of the lemma are obtained by taking real or imaginary parts, as appropriate, in the more general formulas \cite[Equations (3.6) and (3.7), p.~395]{GYY}, which apply to any function $g\in C^{\infty}(S^{n-1})$, extended to a homogeneous of degree $-n+p$ function, $p\in \{1,\dots,n-1\}$, on $\R^n\setminus\{o\}$ as in (\ref{homog}) with $f=g$.
\end{proof}

\begin{lem}\label{lem5}
Let $n\ge 3$ and let $K_{\ee}$ be as in Lemma~\ref{lem1}.  For $0<\ee<\ee_1(n)$, define
\begin{equation}\label{defineL}
\rho_{L_{\ee}}(u)=\left((n-1)(R^{-1}m_{K_{\ee}})(u)\right)^{1/(n-1)},
\end{equation}
for all $u\in S^{n-1}$. Then there is a positive $\ee_2(n)\le \ee_1(n)$ such that $\rho_{L_{\ee}}$ is the radial function of an origin-symmetric star body $L_{\ee}$ in $\R^n$ with $\rho_{L_{\ee}}\in C^{\infty}(S^{n-1})$, for all $0<\ee<\ee_2(n)$.
\end{lem}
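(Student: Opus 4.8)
The plan is to read everything off from identity (\ref{LL}) of Lemma~\ref{lem3}, namely $(n-1)(R^{-1}m_{K_{\ee}})(u)=1+(R^{-1}g_\ee)(u)\,\ee$, where $g_\ee$ is rotationally symmetric and satisfies the uniform bounds (\ref{gg}). The essential point is that $R^{-1}$ does not spoil such uniform bounds, so that $R^{-1}g_\ee$ is bounded uniformly in $\ee$ for $0<\ee<\ee_1(n)$; granting this, choose $\ee_2(n)\le\ee_1(n)$ small enough that $|(R^{-1}g_\ee)(u)|\,\ee<1/2$ for all $u\in S^{n-1}$ and all $0<\ee<\ee_2(n)$. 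Then $(n-1)(R^{-1}m_{K_{\ee}})(u)\in(1/2,3/2)$ for every $u$, so $\rho_{L_{\ee}}=\bigl(1+(R^{-1}g_\ee)\ee\bigr)^{1/(n-1)}$ is a well-defined, strictly positive function. Since $x\mapsto x^{1/(n-1)}$ is $C^{\infty}$ on $(1/2,3/2)$, and $R^{-1}g_\ee\in C^{\infty}(S^{n-1})$ because $m_{K_{\ee}}\in C^{\infty}_e(S^{n-1})$ by Lemma~\ref{lem3} and $R$ is a bijection of $C^{\infty}_e(S^{n-1})$, we get $\rho_{L_{\ee}}\in C^{\infty}(S^{n-1})$. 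A strictly positive continuous function on $S^{n-1}$ is the radial function of a star body containing the origin in its interior, so $L_{\ee}$ is a star body; and since $m_{K_{\ee}}$ is even and $R^{-1}$ maps even functions to even ones, $\rho_{L_{\ee}}$ is even, so $L_{\ee}$ is origin symmetric.

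It remains to establish the uniform bound on $R^{-1}g_\ee$. Since $g_\ee$ is rotationally symmetric and $R$ commutes with rotations, $R^{-1}g_\ee$ is rotationally symmetric as well, so it suffices to bound $d^k(R^{-1}g_\ee)(\phi)/d\phi^k$ uniformly in $\ee$ for each $k\ge 0$. I would use (\ref{RinvF}) to write $(R^{-1}g_\ee)(u)=\frac{\pi}{(2\pi)^n}\,\widehat{g_\ee}(u)$ for the homogeneous of degree $-1$ extension of $g_\ee$, and then apply Lemma~\ref{lem4} with $p=n-1$ (the even case, since $g_\ee$ is even). That formula expresses $\widehat{g_\ee}(u)$ through $G_u(z)=(1-z^2)^{(n-3)/2}\int_{S^{n-1}\cap u^{\perp}}g_\ee(zu+\sqrt{1-z^2}\,v)\,dv$, through its derivatives $G_u^{(k)}(0)$ for $k\le n-2$, and, when $n-1$ is even, through an integral over $[-1,1]$ of $|z|^{-(n-1)}$ against the difference between $G_u(z)$ and its Taylor polynomial of degree $n-2$ at $z=0$. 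The argument $zu+\sqrt{1-z^2}\,v$ stays on $S^{n-1}$ and $\sqrt{1-z^2}$ is $C^{\infty}$ near $z=0$, so by (\ref{gg}) the function $G_u$ is $C^{\infty}$ near $z=0$ with all its $z$- and $\phi$-derivatives bounded uniformly in $\ee$; in particular the difference just mentioned is $O(|z|^{n-1})$ uniformly, the integrand is bounded, and every term in the Lemma~\ref{lem4} formula is controlled uniformly in $\ee$. Differentiating that formula in $\phi$ and repeating the estimates gives constants $c_3(k,n)$ with $|d^k(R^{-1}g_\ee)(\phi)/d\phi^k|\le c_3(k,n)$ for all $0<\ee<\ee_1(n)$ and $\phi\in S^1$, which is what was needed.

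A shorter route to the same estimate uses only that $R$ is a continuous linear bijection of the Fr\'echet space $C^{\infty}_e(S^{n-1})$ onto itself, so that $R^{-1}$ is continuous by the open mapping theorem; then for each $k$ there are $m(k,n)$ and $C(k,n)$ with $\|R^{-1}h\|_{C^k(S^{n-1})}\le C(k,n)\,\|h\|_{C^{m(k,n)}(S^{n-1})}$, and taking $h=g_\ee$ and invoking (\ref{gg}) gives the uniform bound at once. In either approach the only genuine content is this uniformity in $\ee$: one must ensure that passing from the explicit uniform derivative bounds (\ref{gg}) on $g_\ee$ to corresponding bounds on $R^{-1}g_\ee$ loses no uniformity, which via Lemma~\ref{lem4} reduces to the slightly delicate point that the Taylor-remainder integral there is bounded uniformly in $\ee$. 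I expect this to be the main obstacle; the rest is routine.
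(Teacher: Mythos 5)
Your main argument is correct and follows essentially the same route as the paper: reduce via (\ref{RinvF}) to bounding $\widehat{g_\ee}$ on $S^{n-1}$, apply Lemma~\ref{lem4} with $p=n-1$ in the even case, obtain uniform bounds on $G_{\ee,u}$ and its $z$-derivatives at $0$ from (\ref{gg}), and handle the singular integral (when $n$ is odd) by Taylor's theorem near $z=0$ and direct bounds away from it. The only cosmetic difference is that the paper needs nothing more than the $k=0$ bound on $\widehat{g_\ee}$ here, deferring derivative bounds to Lemma~\ref{lem6}, whereas you sketch the full hierarchy of bounds; that costs nothing.

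Your ``shorter route'' via the open mapping theorem is a genuinely different and perfectly valid argument that the paper does not take. Since $C^\infty_e(S^{n-1})$ is a closed subspace of the Fr\'echet space $C^\infty(S^{n-1})$ and $R$ is a continuous linear bijection of it, $R^{-1}$ is automatically continuous, giving seminorm estimates $\|R^{-1}h\|_{C^k}\le C(k,n)\|h\|_{C^{m(k,n)}}$; combined with (\ref{gg}) this yields the needed uniform bound on $R^{-1}g_\ee$ with no Fourier analysis at all. This is cleaner for Lemma~\ref{lem5} in isolation. What the paper's more explicit route buys is the machinery (homogeneous Fourier transforms, Lemma~\ref{lem4}, the bounds (\ref{Gbound})) that is reused in Lemma~\ref{lem6} to control Cartesian derivatives $\partial\widehat{g_\ee}/\partial x_q$ and $\partial^2\widehat{g_\ee}/\partial x_q\partial x_r$, which drive the convexity estimate there; the soft argument would require a further step (exploiting homogeneity of $\widehat{g_\ee}$) to convert spherical $C^k$ control of $R^{-1}g_\ee$ into those Cartesian derivative bounds.
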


\begin{proof}
Let $0<\ee<\ee_1(n)$ and let $g_{\ee}\in C^{\infty}_e(S^{n-1})$ be the function from Lemma~\ref{lem3}.  Extend $g_{\ee}$ to a homogeneous of degree $-1$ function in $C^{\infty}_e(\R^n\setminus\{o\})$ as in (\ref{homog}) with $f=g_{\ee}$ and $p=n-1$.  Then, by (\ref{RinvF}), we have
$$(R^{-1}g_\ee)(u) =\frac{\pi}{(2\pi)^n}\widehat{g_{\ee}}(u),$$
for $u\in S^{n-1}$.  From this, (\ref{LL}), and (\ref{defineL}), we conclude that
\begin{equation}\label{newL}
\rho_{L_{\ee}}(u)=\left(1+\frac{\pi}{(2\pi)^n}\widehat{g_{\ee}}(u)\ee\right)^{1/(n-1)},
\end{equation}
for all $u\in S^{n-1}$.

Define
\begin{equation}\label{Gu}
G_{\ee,u}(z)=(1-z^2)^{(n-3)/2}\int_{S^{n-1}\cap u^\perp} g_{\ee}(zu +\sqrt{1-z^2}\,v) \,dv,
\end{equation}
for $-1\le z\le 1$.  Since (\ref{gg}) holds for all $\phi\in S^1$ (and not just for $\phi\in [0,\pi]$), it is not difficult to check that the rotationally symmetric function $g_{\ee}(x/|x|)$, $x\in \R^n\setminus \{o\}$, has each of its partial derivatives with respect to $x_i$, $i=1,\dots,n$, of any order bounded on $S^{n-1}$, uniformly in $\ee$.  From this and (\ref{Gu}) we see that for $k=0,1,\dots$, there are constants $c_3(k,n)$ such that
\begin{equation}\label{Gbound}
|G_{\ee, u}^{(k)}(0)|\le c_3(k,n),
\end{equation}
for all $0<\ee<\ee_1(n)$ and $u\in S^{n-1}$.

Next, we apply Lemma~\ref{lem4} for the case when $g$ is even (with $g$ and $G_{u}$ replaced by $g_{\ee}$ and $G_{\ee,u}$, respectively).  Suppose first that $n$ is even.  By (\ref{Gbound}) and Lemma~\ref{lem4} with $p=n-1$, it follows that $\widehat{g_{\ee}}(u)=(-1)^{(n-2)/2}\pi G_{\ee,u}^{(n-2)}(0)$ is bounded on $S^{n-1}$.  From (\ref{newL}) we conclude that there is a positive $\ee_2(n)\le \ee_1(n)$ such that $\rho_{L_{\ee}}(u)>0$ for all $u\in S^{n-1}$, and hence such that $L_{\ee}$ is an origin-symmetric star body, for all $0<\ee<\ee_2(n)$.

Now suppose that $n$ is odd. By Lemma~\ref{lem4} with $p=n-1$, we obtain $\widehat{g_{\ee}}(u)=(-1)^{(n-1)/2}(n-2)!(I_{\ee}(u)+\Sigma_{\ee}(u))$, where
$$I_{\ee}(u)=\int_{-1}^1|z|^{-n+1}\left(G_{\ee,u}(z)
-\sum_{k=0}^{n-2}G_{\ee,u}^{(k)}(0)\frac{z^k}{k!}\right)\,dz
$$
and
$$\Sigma_{\ee}(u)=2\sum\left\{\frac{G_{\ee,u}^{(k)}(0)}{k!(2+k-n)}:k=0,\dots,n-3, k~{\text{even}}\right\}.$$
By (\ref{Gbound}), $\Sigma_{\ee}(u)$ is bounded on $S^{n-1}$.  The integral $I_{\ee}(u)$ over the range $[0,1]$ can be written as
\begin{equation}\label{split}
I_{\ee}^+(u)=\int_{0}^{1/2}z^{-n+1}\left(G_{\ee, u}(z)-\sum_{k=0}^{n-2}G_{\ee, u}^{(k)}(0)\frac{z^k}{k!}\right)\,dz+\int_{1/2}^{1}z^{-n+1}\left(G_{\ee, u}(z)-\sum_{k=0}^{n-2}G_{\ee, u}^{(k)}(0)\frac{z^k}{k!}\right)\,dz.
\end{equation}
Using (\ref{Gbound}) again, we see that the second integral in (\ref{split}), as a function of $u$, is bounded on $S^{n-1}$.  By Taylor's theorem, the first integral in (\ref{split}) is equal to
$$\int_{0}^{1/2}\frac{G_{\ee, u}^{(n-1)}(c(z))}{(n-1)!} \,dz,$$
for some $0<c(z)<1/2$.  It follows from (\ref{Gu}) that as a function of $u$, $G_{\ee, u}^{(n-1)}(z)$ is bounded on $S^{n-1}$, uniformly in $z$ for $0\le z\le 1/2$.  Therefore $I_{\ee}^+(u)$ is bounded on $S^{n-1}$.  Similarly, the integral $I_{\ee}(u)$ over the range $[-1,0]$ is bounded on $S^{n-1}$.  Consequently, $I_{\ee}(u)$ itself and therefore $\widehat{g_{\ee}}(u)$ are bounded on $S^{n-1}$.  Then, as for the case when $n$ is even, we conclude from (\ref{newL}) that there is a positive $\ee_2(n)\le \ee_1(n)$ such that $L_{\ee}$ is an origin-symmetric star body, for all $0<\ee<\ee_2(n)$.
\end{proof}

\begin{lem}\label{lem6}
Let $L_{\ee}$ be as in Lemma~\ref{lem5}, where $0<\ee<\ee_2(n)$.  Then there is a positive $\ee_3(n)\le \ee_2(n)$ such that $L_{\ee}$ is convex for all $0<\ee<\ee_3(n)$.
\end{lem}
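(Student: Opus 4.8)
The goal is to show that the origin-symmetric star body $L_{\ee}$, whose radial function is given by (\ref{newL}), is in fact convex once $\ee$ is small enough. The natural strategy, paralleling the proof of Lemma~\ref{lem1}, is to reduce convexity of $L_{\ee}$ to the positivity of the curvature of the planar curve obtained by intersecting $L_{\ee}$ with the $x_1,x_n$-plane, and then to show this curvature is a small perturbation (of order $\ee$) of the curvature of the unit circle, which is $1$. Concretely, write $\rho_{L_\ee}(\phi)=\left(1+c_n\widehat{g_{\ee}}(\phi)\,\ee\right)^{1/(n-1)}$ with $c_n=\pi/(2\pi)^n$, where by the rotational symmetry established in Lemma~\ref{lem3} we may regard $\widehat{g_\ee}$ as a function of $\phi\in S^1$. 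Plugging $r=\rho_{L_\ee}(\phi)$ into the curvature formula (\ref{cpolar}), the curvature of the planar section is
\begin{equation}\label{curvLe}
\kappa_{L_\ee}(\phi)=\frac{2\left(\rho_{L_\ee}'\right)^2-\rho_{L_\ee}\rho_{L_\ee}''+\rho_{L_\ee}^2}{\left(\left(\rho_{L_\ee}'\right)^2+\rho_{L_\ee}^2\right)^{3/2}}.
\end{equation}
When $\ee=0$ we have $\rho_{L_0}\equiv 1$ and $\kappa_{L_0}\equiv 1>0$. So it suffices to show $\rho_{L_\ee}\to 1$ in $C^2(S^1)$ as $\ee\to 0+$; then by continuity $\kappa_{L_\ee}(\phi)>0$ for all $\phi\in S^1$ once $\ee$ is below some threshold $\ee_3(n)\le\ee_2(n)$, and positive curvature of the planar section of a body of revolution gives convexity of $L_\ee$, exactly as in Lemma~\ref{lem1}.

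**Key steps.** First I would record that $\rho_{L_\ee}(\phi)-1$, together with its first and second $\phi$-derivatives, is $O(\ee)$ uniformly in $\phi$. This is where the work of Lemmas~\ref{lem3}--\ref{lem5} is cashed in: from (\ref{newL}) it is enough to know that $\widehat{g_\ee}$ and its first two $\phi$-derivatives are bounded on $S^1$ uniformly in $\ee$ for $0<\ee<\ee_2(n)$. Boundedness of $\widehat{g_\ee}$ itself is exactly what was proved in Lemma~\ref{lem5} (splitting into the cases $n$ even and $n$ odd and using the expansions from Lemma~\ref{lem4} together with the uniform bounds (\ref{Gbound}) on $G_{\ee,u}^{(k)}(0)$). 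For the derivatives $\frac{d}{d\phi}\widehat{g_\ee}$ and $\frac{d^2}{d\phi^2}\widehat{g_\ee}$ one repeats the Lemma~\ref{lem5} argument: differentiating the formulas of Lemma~\ref{lem4} in $\phi$ reduces everything to $\phi$-derivatives of $G_{\ee,u}(z)$ and of $G_{\ee,u}^{(k)}(0)$, and these are controlled uniformly in $\ee$ because, as noted just after (\ref{Gu}), the homogeneous extension of $g_\ee$ has all partial derivatives bounded on $S^{n-1}$ uniformly in $\ee$ (a consequence of (\ref{gg})). Second, from the chain rule applied to $\rho_{L_\ee}=\left(1+c_n\widehat{g_\ee}\ee\right)^{1/(n-1)}$ one gets $\rho_{L_\ee}-1=O(\ee)$, $\rho_{L_\ee}'=O(\ee)$, and $\rho_{L_\ee}''=O(\ee)$, all uniformly in $\phi\in S^1$, where the implied constants depend only on $n$. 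Third, substituting these estimates into (\ref{curvLe}) shows $\kappa_{L_\ee}(\phi)=1+O(\ee)$ uniformly in $\phi$, so there is $\ee_3(n)\in(0,\ee_2(n)]$ with $\kappa_{L_\ee}(\phi)>0$ for all $\phi\in S^1$ and all $0<\ee<\ee_3(n)$. Fourth, invoke the by-now-standard reduction: a body of revolution about the $x_n$-axis whose defining planar curve (its section by the $x_1,x_n$-plane) is $C^2$ with everywhere positive curvature is convex; hence $L_\ee$ is convex, completing the proof.

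**Main obstacle.** The only genuinely substantive point is the uniform-in-$\ee$ boundedness of the first two $\phi$-derivatives of $\widehat{g_\ee}$ on $S^1$. The Fourier transform $\widehat{g_\ee}$ is defined only in the distributional sense and, depending on the parity of $n$, its explicit description from Lemma~\ref{lem4} involves either a high-order derivative $G_{\ee,u}^{(n-2)}(0)$ or a singular integral $I_\ee(u)$ regularized by subtracting a Taylor polynomial; one must check that differentiating these expressions twice in $\phi$ still produces quantities bounded by constants depending only on $n$. This is not conceptually hard—it is the same bookkeeping as in Lemma~\ref{lem5}, now with two extra $\phi$-derivatives riding along—but it is the step that requires care, and in practice one would remark that differentiation under the integral sign is justified (using $n\ge 3$ for the $d\mu(s)$ integrals, as already flagged after (\ref{A})) and that all the relevant quantities inherit their uniform bounds from (\ref{gg}) via (\ref{Gu}). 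Everything downstream of that is the elementary perturbation argument sketched above.
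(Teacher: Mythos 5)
Your reduction of convexity to positivity of the curvature of the planar section, via the polar formula (\ref{cpolar}) and the observation that everything is a perturbation of the unit circle, is exactly the paper's strategy, and your identification of the crux---uniform-in-$\ee$ bounds on the first and second derivatives of $\widehat{g_\ee}$---is also correct. Where you diverge is in how those bounds are obtained, and here the paper uses a genuinely different, and cleaner, device. Rather than differentiating the formulas of Lemma~\ref{lem4} directly in $\phi$ (which forces you to differentiate through the changing domain $S^{n-1}\cap u^\perp$ inside $G_{\ee,u}$ and, in the singular case of Lemma~\ref{lem4}, to justify differentiating twice under the Taylor-regularized integral $I_\ee(u)$), the paper applies the Fourier identities (\ref{FD2}) and (\ref{Lap}) to write
$$\frac{\partial\widehat{g_\ee}}{\partial x_q}=i\,|x|^{-2}\,\widehat{\triangle(x_q g_\ee)},\qquad
\frac{\partial^2\widehat{g_\ee}}{\partial x_q\partial x_r}=|x|^{-2}\,\widehat{\triangle(x_q x_r g_\ee)},$$
on $\R^n\setminus\{o\}$ (with the distributional justification handled by noting equality on test functions supported away from the origin). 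The functions $\triangle(x_q g_\ee)$ and $\triangle(x_q x_r g_\ee)$ are smooth, homogeneous of degrees $-2$ and $-1$ respectively, and have the parities and uniform-in-$\ee$ derivative bounds needed (again by (\ref{gg})) so that Lemma~\ref{lem4} with $p=n-2$ and $p=n-1$ applies to them verbatim. This converts the bound you want into a second, fresh application of the same lemma on new functions, entirely sidestepping the need to differentiate a singular integral in $u$.

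Your alternative---differentiating Lemma~\ref{lem4}'s expressions in $\phi$---can be made to work, but it is more delicate than the phrase ``same bookkeeping as in Lemma~\ref{lem5}'' suggests. You would need to (i) parametrize the moving domain $S^{n-1}\cap u(\phi)^\perp$ by a rotation so the integral defining $G_{\ee,u}(z)$ has a fixed domain before differentiating in $\phi$, and (ii) justify differentiating under the singular $dz$ integral in the even-$p$ branch of Lemma~\ref{lem4}, which requires an additional Taylor-remainder argument applied to $\partial_\phi G_{\ee,u}$ and $\partial^2_\phi G_{\ee,u}$ (not just to $G_{\ee,u}$ as in Lemma~\ref{lem5}); your parenthetical appeal to differentiation under the $d\mu(s)$ integrals from (\ref{A}) is not the relevant interchange here. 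So: correct plan, correct target, but a different and more hands-on route to the derivative bounds than the paper's; the paper's Fourier-identity trick buys a substantially shorter verification.
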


\begin{proof}
Let $0<\ee<\ee_2(n)$ and let $g_{\ee}$ be as in Lemma~\ref{lem5}.  Since $L_\ee$ is a body of revolution whose radial function is given by (\ref{newL}), its intersection with the $x_1,x_n$-plane is a curve whose radial function is
\begin{equation}\label{rp}
r(\phi)= \left(1+\frac{\pi}{(2\pi)^n}\widehat{g_{\ee}}(\sin\phi\,e_1+\cos\phi\,e_n)\,\ee\right)
^{1/(n-1)}.
\end{equation}
It suffices to prove that this curve is convex for small $\ee$.

We claim that the first and second partial derivatives of $\widehat{g_{\ee}}$ with respect to $x_1$ and $x_n$ are bounded on $S^{n-1}$ uniformly in $\ee$ for small $\ee$.  Once this has been proved, it follows that the first and second derivatives of $\widehat{g_{\ee}}(\sin\phi\,e_1+\cos\phi\,e_n)$ with respect to $\phi$ are bounded uniformly in $\ee$ for small $\ee$.  The required convexity is then an easy consequence of the polar coordinate formula (\ref{cpolar}) for curvature, (\ref{rp}), and the fact that $r(\phi)$ is close to 1 when $\ee$ is small.

It remains to prove the claim above. By \cite[Lemma 3.16]{Ko}, $\widehat{g_{\ee}}$ is an infinitely differentiable function that is homogeneous of degree $-n+1$ on $\R^n\setminus \{o\}$.  Therefore for $q=1,\dots,n$, $\partial  \widehat{g_{\ee}}/\partial x_q$ is an infinitely differentiable function that is homogeneous of degree $-n$ on $\R^n\setminus \{o\}$.

Let $\psi$ be a test function with support in $\R^n\setminus\{o\}$ and let $q\in \{1,\dots,n\}$.  Denote by $\langle \partial\widehat{g_{\ee}}/\partial x_q,\psi\rangle$ the action of the distribution $\partial\widehat{g_{\ee}}/\partial x_q$ and note that since $\psi$ has support in $\R^n\setminus\{o\}$, the derivative $\partial\widehat{g_{\ee}}/\partial x_q$ can be regarded in both the distributional and the usual sense.  Let $h_{\ee,q}(x)=x_q g_{\ee}(x)$ for $x\in \R^n\setminus\{o\}$.  Using first (\ref{FD2}) with $f=g_{\ee}$ and then (\ref{Lap}) with $f=h_{\ee,q}$, we obtain
\begin{eqnarray*}
\left\langle \frac{\partial\widehat{g_{\ee}}}{\partial x_q},\psi\right\rangle &=&
-i\langle\widehat{h_{\ee,q}},\psi\rangle= - i\langle |x|^2\widehat{h_{\ee,q}}(x), |x|^{-2}\psi(x)\rangle\\
&=& i\langle\widehat{\Delta h_{\ee,q}}(x),|x|^{-2}\psi(x)\rangle
=i\langle|x|^{-2}\widehat{\triangle h_{\ee,q}}(x),\psi(x) \rangle.
\end{eqnarray*}
This gives
\begin{equation}\label{distr}
\frac{\partial \widehat{g_{\ee}}}{\partial x_q}=i |x|^{-2}\widehat{\triangle h_{\ee,q}}
\end{equation}
on $\R^n\setminus\{o\}$, because by \cite[Theorem~6.25]{Rud}, distributions that are equal on test functions with support in $\R^n\setminus\{o\}$ can differ at most by derivatives of the delta function.

Let $g(x)=\triangle h_{\ee,q}(x)=\triangle(x_q g_{\ee})(x)$ for $x\in \R^n\setminus\{o\}$.  Then, with (\ref{distr}) in hand, the desired bounds for $\partial  \widehat{g_{\ee}}/\partial x_q$ on $S^{n-1}$ will follow from corresponding bounds for $\widehat{g}$.  To this end, observe that since $x_q g_{\ee}$ is an infinitely differentiable odd function on $\R^n\setminus \{o\}$ that is homogeneous of degree $0$, $g$ is an infinitely differentiable odd function on $\R^n\setminus \{o\}$ that is homogeneous of degree $-2$.  Therefore $\widehat{g}$ is an infinitely differentiable function on $\R^n\setminus \{o\}$ that is homogeneous of degree $-n+2$ and which can be computed by Lemma \ref{lem4}.  Letting
$$
G_{u}(z)=(1-z^2)^{(n-3)/2}\int_{S^{n-1}\cap u^\perp} g(zu +\sqrt{1-z^2}\,v) \,dv,
$$
for $-1\le z\le 1$, and using (\ref{gg}), we see that for $k=0,1,\dots$, there are constants $c_4(k,n)$ such that
\begin{equation}\label{lastb}
|G_{u}^{(k)}(0)|\le c_4(k,n),
\end{equation}
for all $0<\ee<\ee_2(n)$ and $u\in S^{n-1}$.  Bounds for $\widehat{g}$ now follow from Lemma~\ref{lem4} for the case when $g$ is odd and $p=n-2$.  Indeed, it follows immediately from this and (\ref{lastb}) that when $n$ is even, $\widehat{g}(u)=(-1)^{(n-2)/2}\pi G_{u}^{(n-3)}(0)$ is bounded on $S^{n-1}$ uniformly in $\ee$ for small $\ee$.  The case when $n$ is odd is handled in exactly the same way as in Lemma~\ref{lem5} and we omit the details.

Bounds for the second partial derivatives of $\widehat{g_{\ee}}$ on $S^{n-1}$ are obtained in an entirely analogous fashion, as follows.   Let $q,r\in \{1,\dots,n\}$ and let $j_{\ee,q,r}(x)=x_q x_r g_{\ee}(x)$ for $x\in \R^n\setminus\{o\}$. An argument similar to the one that showed (\ref{distr}) gives
$$\frac{\partial^2 \widehat{g_{\ee}}}{\partial x_q\partial x_r}=|x|^{-2}\widehat{\triangle j_{\ee,q,r}}$$
on $\R^n\setminus\{o\}$. Here $\widehat{\triangle j_{\ee,q,r}}$  is an infinitely differentiable even function on $\R^n\setminus \{o\}$ that is homogeneous of degree $-1$.  Bounds for $\widehat{\triangle j_{\ee,q,r}}$ on $S^{n-1}$ are then obtained via (\ref{lastb}) and Lemma~\ref{lem4} for the case when $g$ is even and $p=n-1$.  Again, we omit the details since the argument is exactly the same as for the first partial derivatives of $\widehat{g_{\ee}}$.
\end{proof}

\noindent{\em{Proof of Theorem~\ref{mainthm}}.} Let $n\ge 3$ and $0<\ee<\ee_3(n)$.  Let $K=K_{\ee}$ be defined by (\ref{rho}) and let $L=L_{\ee}$ be defined by (\ref{defineL}).  By Lemmas~\ref{lem1} and~\ref{lem6}, $K$ and $L$ are convex bodies of revolution about the $x_n$-axis.  Lemma~\ref{lem1} shows that $\rho_K\in C^{\infty}(S^{n-1})$ and that $K$ is not centrally symmetric.  From Lemma~\ref{lem5}, we get that $\rho_L\in C^{\infty}(S^{n-1})$ and that $L$ is origin symmetric.  The Brunn-Minkowski theorem implies that $m_L(u)=V(L\cap u^{\perp})$ for all $u\in S^{n-1}$. By Lemma~\ref{lem3}, we have $m_K\in C^{\infty}_e(S^{n-1})$.  Therefore, by (\ref{defineL}) and the fact that $R:C_e^{\infty}(S^{n-1})\rightarrow C_e^{\infty}(S^{n-1})$ is a continuous bijection, we obtain
$$m_L(u)=V(L\cap u^{\perp})=\frac{1}{n-1}(R\rho_L^{n-1})(u)=(R(R^{-1}m_K))(u)=m_K(u),$$
for all $u\in S^{n-1}$. \qed

\section{Concluding remarks}\label{conclude}

The definition (\ref{rho}) of the body $K=K_{\ee}$ appears to be essentially the simplest that allows our construction to work.  For example, if we define
$$
\rho_{K_{\ee}}(\phi)=\left(1+\ee \cos\phi\right)^{-1},
$$
where $0<\ee<1$ and $0\le \phi\le \pi$ is the angle with the positive $x_n$-axis, then $K=K_{\ee}$ is an ellipsoid.  In particular, it is centrally symmetric.  If $K'$ denotes the origin-symmetric translation of $K$, then our construction would yield $IL=CK=CK'=IK'$.  This would imply $K'=L$ and hence that $K$ and $L$ are equal up to a translation.  Of course the formula
$$
\rho_{K_{\ee}}(\phi)=\left(1+\ee\cos^2\phi\right)^{-2},
$$
gives a $K=K_{\ee}$ that is already origin symmetric.

Klee \cite{Kl}, \cite{Kl2} (see also \cite[p.~24]{CFG} and \cite[Problem 8.8(ii)]{G}) also asked whether a convex body in $\R^n$, $n\ge 3$, whose inner section function is constant, must be a ball.  The hypothesis is actually weaker than that of Bonnesen's question in \cite{B} of whether a convex body in $\R^n$, $n\ge 3$, must be a ball if both its inner section function and its brightness function are constant.  Today, Bonnesen's question (see \cite[p.~24]{CFG} and \cite[Problem 8.9(ii)]{G}) is one of the oldest open problems in convex geometry.  (For $n=2$, a counterexample is again provided by any convex body of constant width that is not a disk; this follows from a result of E.~Makai and H.~Martini (see \cite[Theorem~8.3.5]{G}) that in the plane, the cross-section body and projection body coincide.)

\bigskip

\end{document}